\newtheorem{thmA}{Theorem} 
\newtheorem{propA}[thmA]{Proposition}
\title{Profinite rigidity, Kleinian groups, and the cofinite Hopf property}
\author{M. R. Bridson}
\author{A. W. Reid}
\address{\newline Mathematical Institute, 
\newline Andrew Wiles Building,
\newline University of Oxford,
\newline Oxford OX2 6GG, UK}
\email{ bridson@maths.ox.ac.uk}
\address{\newline Department of Mathematics,
\newline Rice University, 
\newline Houston, TX 77005, USA}
\email{ alan.reid@rice.edu}
\def\vv{\Pi} 
\def\a{\alpha}
\def\b{\beta}
\def\-{\overline}
\def\wh{\widehat}
\def\G{\Gamma}
\def\ker{{\rm{ker}}\ }
 \def\H{\mathbb{H}}
 \def\Z{\mathbb{Z}}
 \def\R{\mathbb{R}}
 \def\Q{\mathbb{Q}} 
 \def\C{\mathbb{C}} 
 \def\D{\Delta}
\def\ab{\rm{ab}}
\DeclareMathOperator{\SL}{SL} \DeclareMathOperator{\PSL}{PSL}
\DeclareMathOperator{\GL}{GL} 
 \DeclareMathOperator{\SO}{SO}
\def\Vol{\rm{Vol}}
\def\qed{ $\sqcup\!\!\!\!\sqcap$}
\def\tr{\mbox{\rm{tr}}\, }
\def\P{\mbox{\rm{P}}}
\def\Or{\mbox{\rm{O}}}
\def\G{\Gamma}
\def\<{\langle}
\def\>{\rangle}
\def\G{\Gamma}
\def\wh{\widehat}
\def\onto{\twoheadrightarrow}
\newtheorem{theorem}{Theorem}[section]
\newtheorem{lemma}[theorem]{Lemma}
\newtheorem{corollary}[theorem]{Corollary}
\newtheorem{proposition}[theorem]{Proposition}
\theoremstyle{definition} 
\newtheorem{remark}[theorem]{Remark}
\begin{document}
 
\date{}

\begin{abstract}  
Let $\G$ be a non-elementary Kleinian group and $H<\G$ a finitely generated, proper subgroup. 
We prove that if $\G$ has finite co-volume, then the 
profinite completions of $H$ and $\G$ are not isomorphic. 
If $H$ has finite index in $\G$, then there is a finite group onto which $H$ maps but $\G$ does not. 
These results
streamline the existing proofs that there exist full-sized groups that are profinitely rigid in the absolute sense. They build on a circle
of ideas that can be used to distinguish among the profinite completions of subgroups of finite
index in other contexts, e.g. limit groups.  
We construct new examples of profinitely rigid groups,
including the fundamental group of the hyperbolic $3$-manifold $\Vol(3)$ 
and of the $4$-fold cyclic branched cover of the figure-eight knot. We also prove that if a lattice in ${\rm{PSL}}(2,\C)$ 
is profinitely rigid, then so is its normalizer in $\PSL(2,\C)$.  
\end{abstract}

%%%%%%%%%%%%%%%%%%%%%%%%%%%%%%%%%%%%%%%%
%%%%%%%%%%%%%%%%%%%%%%%%%%%%%%%%%%%%%%%%

%\subjclass{20E26, 20E18 (20F65, 20F10, 57M25) }

\keywords{Profinite rigidity, Kleinian group}

\maketitle

\centerline{\em{Dedicated to Gopal Prasad on the occasion of his $75$th birthday}}

%%%%%%%%%%%%%%%%%%%%%%%%%%%%%%%%%%%%%%%%%%%%%%%%%%%%%
%
%
%  INTRO 
%
%
%%%%%%%%%%%%%%%%%%%%%%%%%%%%%%%%%%%%%%%%%%%%%%%%%%%%%

\section{Introduction}\label{intro}

{\em Rigidity theorems} are propositions that tell us objects of a certain kind are equivalent in a weak sense only if they
are equivalent in an apparently-stronger sense. A celebrated prototype for such theorems comes from the work of
Mostow \cite{Mo} and Prasad \cite{Pr}: beyond dimension 2, if complete hyperbolic manifolds of finite volume have
isomorphic fundamental groups, then the manifolds must be isometric. In our previous work \cite{BR1, BCR, BRW, BMRS1, BMRS2} we showed
that hyperbolic orbifolds also play an important role in the study of profinite rigidity. 

Profinite rigidity theorems are propositions that tell us
groups of a certain kind will have the same set of finite quotients only if the groups are isomorphic. A finitely generated, residually finite
group $\G$  is  {\em profinitely rigid} in the absolute sense if its set of finite quotients distinguishes it from all other finitely generated,  residually finite groups: in the language of profinite completions (recalled in Section \ref{s2}), 
$\wh{\Lambda}\cong\wh{\G}\implies \Lambda\cong\G$.

In our work with McReynolds and Spitler \cite{BMRS1, BMRS2}, we constructed arithmetic lattices  in ${\rm{PSL}}(2,\R)$
and ${\rm{PSL}}(2,\C)$ that are profinitely rigid in the absolute sense. It remains unknown whether all such lattices are
profinitely rigid. The proofs in \cite{BMRS1, BMRS2} have two stages: first, given the lattice $\G$ and a finitely generated, residually finite
group $\Lambda$ with $\wh{\Lambda}\cong\wh{\G}$, one tries to construct a Zariski-dense representation 
$\Lambda\to {\rm{PSL}}(2,\C)$ with image  in $\G$; one then has to argue that if $\Lambda\to\G$ were not  surjective, a contradiction
would ensue. Our main objective in the current article is to develop a general framework that, in particular, provides a 
more uniform and comprehensive treatment of the second stage in these proofs.

\begin{thmA}\label{t1}
\label{main1}
Let $\G$ be a Kleinian group of finite co-volume. If $H<\G$ is a finitely generated, proper subgroup, then  $\wh{\G}$ and $\wh{H}$
are not isomorphic.\end{thmA}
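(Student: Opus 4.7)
I would split into two cases according to whether $H$ has infinite or finite index in $\G$, using different invariants in each.

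If $[\G:H]=\infty$, the plan is to appeal to the rational Euler characteristic $\chi$. Since $\G$ is the (orbifold) fundamental group of a finite-volume hyperbolic $3$-manifold, $\chi(\G)=0$, both in the cocompact and in the cusped case. A proper finitely generated $H$ of infinite index is a Kleinian group of infinite covolume, and a convex-core analysis---together with Thurston's virtual-fibre theorem to force a geometrically infinite $H$ to be a surface group of genus at least two---shows that $\chi(H)<0$ unless $H$ is virtually abelian. In the virtually abelian subcase, $\wh{H}$ is virtually abelian whereas $\wh{\G}$ is not, a condition detected by finite quotients. Otherwise, I would invoke the goodness (in the sense of Serre) of finite-covolume Kleinian groups, established via Agol's virtual specialness together with work of Cavendish and Grunewald--Jaikin--Zalesskii, to promote $\chi$ to a profinite invariant, contradicting $\chi(\G)\neq\chi(H)$. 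In the cocompact case this can alternatively be argued by cohomological dimension, since $\cd(\G)=3$ while any finitely generated infinite-index subgroup has $\cd\leq 2$ by Strebel's theorem.

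For the finite-index case, let $n=[\G:H]>1$. The key tool is the cofinite Hopf property of $\G$: every endomorphism with image of finite index is an automorphism. This follows from Mostow rigidity together with the multiplicativity of covolume under finite covers. Assuming for contradiction $\wh{H}\cong\wh{\G}$, LERF of $\G$ (Agol) ensures that the inclusion $H\hookrightarrow\G$ induces an injection $\wh{H}\hookrightarrow\wh{\G}$ with image of index exactly $n$; composing with the postulated isomorphism produces a continuous endomorphism of $\wh{\G}$ whose image has index $n>1$. From this I would extract a finite group $F$ onto which $H$ maps but $\G$ does not---the strengthening promised in the abstract---via a Gasch\"utz-style counting of surjections, in which the cofinite Hopf property forces an asymmetry between $H$ and $\G$.

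The hard part will be the finite-index case, specifically the passage from the abstract continuous endomorphism of $\wh{\G}$ to either a discrete endomorphism of $\G$ (to which cofinite Hopf applies directly) or to a single distinguishing finite quotient. This conversion between profinite and discrete data is exactly where the paper's ``cofinite Hopf property'' framework does its principal work, and I expect it will rely on the character-variety methods developed in the authors' earlier treatments of absolute profinite rigidity.
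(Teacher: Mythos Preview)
Your infinite-index case takes a different route from the paper but is broadly viable. The paper uses the first $\ell_2$-Betti number rather than $\chi$: by Lott--L\"uck, $b_1^{(2)}(\G)=0$ exactly when a non-elementary Kleinian group $\G$ has finite covolume, and $b_1^{(2)}$ is already known to be a profinite invariant of finitely presented groups. Your Euler-characteristic argument can also be made to work (goodness makes $H^*(-;\mathbb{F}_p)$, and hence $\chi$, a profinite invariant), though the details you sketch are imprecise: what forces a geometrically infinite $H$ to be virtually a surface group is tameness (Agol, Calegari--Gabai) together with Canary's covering theorem, not ``Thurston's virtual-fibre theorem''; and LERF is irrelevant for finite-index subgroups, where the injectivity of $\wh{H}\to\wh{\G}$ is automatic.

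The finite-index case has a genuine gap, and while you correctly locate the difficulty, your guess at its resolution is wrong. The paper does \emph{not} convert the profinite endomorphism to a discrete one, and character-variety methods play no role whatsoever in this step---those techniques from the authors' earlier work are specific to particular arithmetic lattices enjoying Galois rigidity and say nothing about a general lattice $\G$. Nor does any Gasch\"utz-type count of surjections produce the distinguishing finite quotient. What the paper actually does is work entirely profinitely: a profinite version of Hirshon's argument shows that any epimorphism from a finitely generated profinite group onto a finite-index subgroup has finite kernel; since $\wh{\G}$ has no nontrivial finite normal subgroup (a consequence of goodness), this forces $\wh{\G}\cong\wh{H}$. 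Iterating through the standard subgroup correspondence then yields an infinite sequence of lattices $H_k<\G$ with $[\G:H_k]=n^k$ and $\wh{H_k}\cong\wh{\G}$, pairwise non-isomorphic by Mostow--Prasad rigidity. The contradiction is supplied by Liu's recent theorem that only finitely many finite-volume hyperbolic $3$-manifolds can share a given profinite completion. Liu's result is the essential external input you are missing, and there is no known way to finish the argument for arbitrary lattices without it.
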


The most interesting part of the proof of this theorem concerns subgroups of finite index, where our arguments apply to 
finitely generated, non-elementary Kleinian groups in general.

\begin{thmA}\label{t2}
\label{morefinite1}
Let $\G$ be a finitely generated, non-elementary Kleinian group. For each proper subgroup of finite index $H<\G$, there exists a finite
group $Q$ such that $H$ maps onto $Q$ but $\G$ does not. 
\end{thmA}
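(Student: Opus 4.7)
The plan is to pass to the normal core $K := \bigcap_{g \in \G} gHg^{-1}$ of $H$ in $\G$; then $K \triangleleft \G$, $K \leq H$, and $G := \G/K$ is finite with $\bar H := H/K$ a proper subgroup. Because $K$ has finite index in $\G$, it is again a finitely generated non-elementary Kleinian group and inherits a Zariski-dense representation into $\PSL(2,\C)$. After specialising over a number ring and applying strong approximation, one produces surjections $\theta : K \twoheadrightarrow L$ onto $L = \PSL(2,\F_q)$ for infinitely many prime powers $q$.

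The conjugation action of $\G$ on $K$ induces a $G$-action on the finite set $\Epi(K,L)/\mathrm{Inn}(L)$. The first technical step is to choose $q$ large enough that $|L| > |G|$ and to select a $\theta$ whose $G$-stabiliser in this set is exactly $\bar H$. The heuristic is that surjections with strictly larger stabiliser must extend to surjections $\G \twoheadrightarrow L$; these are controlled by the finite-dimensional $\PSL(2,\C)$-character variety of $\G$ together with Galois conjugation, so for large $q$ one can genuinely arrange the stabiliser to be $\bar H$. Because $L$ is simple with trivial centre and (for suitable $q$) trivial outer automorphism group, the stabiliser condition lets a standard cocycle argument extend $\theta$ to a surjection $\tilde\theta : H \twoheadrightarrow L$, while no such extension to $\G$ exists. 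Combined with the quotient $H \twoheadrightarrow \bar H$, this yields a surjection of $H$ onto a suitable finite group $Q$ assembled from $L$ and $\bar H$.

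The final and hardest step is to rule out $\G \twoheadrightarrow Q$. Assuming such a $\psi$ exists, the plan is to analyse $\psi(K) \triangleleft Q$: the inequality $|L| > |G|$ together with the simplicity of $L$ forces $\psi|_K$ to project onto the $L$-component of $Q$, producing a surjection $K \twoheadrightarrow L$ that extends to $\G$ and so has full $G$-stabiliser, contradicting the choice of $\theta$. The main obstacle is choosing $Q$ so that this contradiction really bites, since the naive $Q = L \times \bar H$ leaves room for $\G$ to surject via an unrelated surjection $K \twoheadrightarrow L$ with full $G$-stabiliser. My expectation is that $Q$ should be taken as the image of $H$ under the induced representation $\mathrm{Ind}_K^H\, \theta : H \to L \wr \bar H$, or a closely related twisted semi-direct product, whose normal-subgroup structure is rigid enough to force every hypothetical surjection $\G \twoheadrightarrow Q$ to factor through an induction-type construction, thereby pinning down the $L$-component uniquely and producing the contradiction.
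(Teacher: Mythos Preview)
Your approach is entirely different from the paper's, and as you yourself signal with words like ``heuristic'' and ``expectation'', it has a genuine gap that I do not see how to close.

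The crux is the existence of a surjection $\theta : K \twoheadrightarrow L$ whose $G$-stabiliser is exactly $\bar H$. The surjections that strong approximation hands you come from the inclusion $K \hookrightarrow \G \hookrightarrow \PSL(2,\C)$, and since this inclusion extends to $\G$, every such $\theta$ already has \emph{full} $G$-stabiliser. To obtain a stabiliser containing $\bar H$ but not all of $G$ you would need a Zariski-dense representation of $K$ that is $H$-invariant but not $\G$-invariant up to conjugacy; nothing in your sketch explains why one should exist, and the character variety of $K$ is just as finite-dimensional as that of $\G$, so the counting heuristic creates no asymmetry between them. (Note too that ``strictly larger than $\bar H$'' need not mean ``all of $G$'' unless $\bar H$ is maximal, so the first sentence of your heuristic is already suspect.) Your proposed repair via $\mathrm{Ind}_K^H\,\theta$ does not help: if the stabiliser of $[\theta]$ contains $\bar H$, then the conjugates $\theta^{h}$ for $h\in H$ are pairwise inner-equivalent, so the image of $K$ under the induced representation is a twisted diagonal copy of $L$, and the image $Q$ of $H$ is again an extension of $\bar H$ by $L$ --- no more rigid than the $L\times\bar H$ you already rejected.

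For comparison, the paper argues indirectly. Assuming every finite quotient of $H$ is a quotient of $\G$, two short profinite lemmas (a one-sided version of the fact that finite quotients determine $\wh\G$, together with a profinite cofinite-Hopf property) yield a continuous surjection $\wh\G \twoheadrightarrow \wh H$ with finite kernel; since $\wh\G$ has no non-trivial finite normal subgroup (Kleinian groups are good), this forces $\wh\G \cong \wh H$. Iterating produces subgroups $H_n<\G$ with $[\G:H_n]\to\infty$ and $\wh{H_n}\cong\wh\G$. In the finite-covolume case this contradicts Liu's theorem that only finitely many finite-volume hyperbolic $3$-manifolds can share a profinite completion; in the infinite-covolume case one instead invokes $b_1^{(2)}(\G)>0$ together with the profinite invariance and multiplicativity of $b_1^{(2)}$.
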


Our proof of Theorem \ref{t2}  relies on specific properties of Kleinian groups, including
Liu's recent result \cite{Liu} that if $M$ is a complete, orientable hyperbolic $3$-manifold of finite volume,
then there are only finitely many other such manifolds $N$ with $\wh{\pi_1(N)}\cong\wh{\pi_1(M)}$. 
But the  general outline of our proof is built around
general facts concerning (virtual) epimorphisms of finitely generated profinite groups that have wider implications for the
study of profinite rigidity; see in particular Propositions \ref{p1} and \ref{p2}. The applications in this article are based on the
following consequence of these results.

\begin{propA}\label{p-intro} Let $G$ be a finitely generated, residually finite group and $H<G$ a subgroup of finite index.
If every finite quotient of $H$ is also a quotient of $G$, then there is a
continuous surjection $\wh{G}\onto\wh{H}$ with finite kernel.
\end{propA}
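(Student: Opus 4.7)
The first step is to identify $\wh H$ with the closure $\bar H\leq \wh G$ of $H$; this is standard since $H$ is finitely generated of finite index $n=[G:H]$ in the residually finite group $G$, so $\wh H$ embeds as the open subgroup $\bar H$ of index $n$ in $\wh G$.

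Next I would construct a continuous epimorphism $\phi:\wh G\onto\wh H$ by a compactness argument. Writing $\wh H=\varprojlim_N \wh H/N$ over the open normal subgroups of $\wh H$, the hypothesis says that each finite quotient $\wh H/N$ is also a quotient of $\wh G$, so each set $\Epi(\wh G,\wh H/N)$ is non-empty and finite. These assemble into an inverse system via the natural projections, and since an inverse limit of non-empty finite sets is non-empty, an element of the limit $\varprojlim_N \Epi(\wh G,\wh H/N)=\Epi(\wh G,\wh H)$ provides the desired $\phi$.

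The main obstacle is showing $\ker\phi$ is finite. Restricting $\phi$ to $\bar H$ and identifying $\bar H\cong\wh H$ gives a continuous endomorphism $\tau:\wh H\to\wh H$ whose image has index at most $n$ in $\wh H$. The key lemma (the ``cofinite Hopf'' content alluded to in the title) is: \emph{if $\tau:P\to P$ is a continuous endomorphism of a finitely generated profinite group $P$ with $[P:\tau(P)]=m<\infty$, then $|\ker\tau|\leq m$.} To prove it, for each open normal subgroup $N\triangleleft P$, the inclusion $\tau(P)\subseteq\tau(P)N$ yields $[P:\tau(P)N]\leq m$, which rearranges to $[\tau^{-1}(N):N]\leq m$; since $\ker\tau\leq\tau^{-1}(N)$, the quotient $\ker\tau/(\ker\tau\cap N)$ has size at most $m$, and passing to the inverse limit over $N$ bounds $|\ker\tau|$ by $m$.

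Finally, applying the lemma to $\tau$ yields $|\ker\phi\cap\bar H|\leq n$, and since $\ker\phi/(\ker\phi\cap\bar H)$ injects (as a set) into the coset space $\wh G/\bar H$ of size $n$, we conclude $|\ker\phi|\leq n^2$, so $\ker\phi$ is finite.
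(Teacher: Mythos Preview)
Your construction of $\phi:\wh G\onto\wh H$ via the inverse system of nonempty finite sets $\Epi(\wh G,\wh H/N)$ is the paper's Proposition~\ref{p1}, phrased as a compactness argument rather than a K\"onig's-lemma walk down a rooted tree. Where you diverge is in bounding the kernel. The paper follows Hirshon (Proposition~\ref{p2}): iterating $\phi$, one studies the descending chain $H_k=\phi^k(\wh G)$ and the cores of $H_{k+1}$ in $H_k$, and shows that an infinite kernel would force infinitely many distinct maps from $\wh G$ to a fixed finite quotient, contradicting finite generation. This gives finiteness of $\ker\phi$ but no explicit bound, while also yielding extra structure (the iterated restrictions eventually become isomorphisms; see Remark~\ref{rem_iso}). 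Your route is shorter and quantitative: reducing modulo $N$, the first isomorphism theorem on the finite group $P/N$ gives $|\ker(\tau\bmod N)|=[P/N:\tau(P)N/N]\le m$, whence $|\ker\tau|\le m$ and $|\ker\phi|\le n^2$.

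One repair is needed: the step ``$[P:\tau(P)N]\le m$ rearranges to $[\tau^{-1}(N):N]\le m$'' presumes $N\subseteq\tau^{-1}(N)$, i.e.\ $\tau(N)\subseteq N$, which fails for an arbitrary open normal $N$ (and without it the induced endomorphism of $P/N$ is not even defined). Restrict instead to the fully invariant open subgroups $I_k$, the intersection of all open subgroups of index at most $k$; these are open since $P$ is finitely generated, form a neighbourhood basis at $1$, and satisfy $\tau(I_k)\subseteq I_k$ because $[P:\tau^{-1}(U)]\le[P:U]$ for every open $U$. For $N=I_k$ your computation goes through verbatim, and since $\bigcap_k I_k=1$ the inverse-limit step still gives $|\ker\tau|\le m$.
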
 

This proposition is most useful in geometric situations where one can control finite normal subgroups and one has non-zero invariants
(for example $\ell_2$-betti numbers)  that
increase on passage to finite-index subgroups. In such circumstances, one can often prove that the groups concerned are
{\em cofinitely Hopfian}, meaning that every endomorphism $\phi:G\to G$ with $[G:\phi(G)]<\infty$ must be an automorphism \cite{BGHM}. 
We illustrate the potential of this circle of ideas by proving that every proper, finite-index subgroup of a non-abelian limit
group has a finite quotient that the ambient group does not have (Proposition \ref{p:limit}). We  also explain, in Section \ref{s:go-up}, how
these general observations, combined with Mostow-Prasad rigidity, allow one to extend profinite rigidity results for lattices to
over-lattices. For example:

\begin{thmA}\label{t:intro-norm}
If a lattice $\G<{\rm{PSL}}(2,\C)$ is profinitely rigid, then so is its normaliser in $\PSL(2,\C)$. 
\end{thmA}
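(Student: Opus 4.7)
The plan is to bootstrap profinite rigidity from $\Gamma$ to its normaliser $N:=N_{\PSL(2,\C)}(\Gamma)$, using Mostow--Prasad rigidity as the geometric ingredient and Theorem \ref{t2} as the combinatorial one. First observe that $N$ is itself a lattice of finite co-volume: the quotient $N/\Gamma$ injects into $\Out(\Gamma)\cong\Isom(\H^3/\Gamma)$, which is finite for a finite-volume hyperbolic $3$-orbifold, so $[N:\Gamma]<\infty$.

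Let $\Lambda$ be a finitely generated, residually finite group with $\wh{\Lambda}\cong\wh{N}$. Under this isomorphism the open normal subgroup $\wh{\Gamma}\ns\wh{N}$ corresponds to the closure of a finite-index normal subgroup $\Lambda_0\ns\Lambda$, with $[\Lambda:\Lambda_0]=[N:\Gamma]$ and $\wh{\Lambda_0}\cong\wh{\Gamma}$. Profinite rigidity of $\Gamma$ yields an isomorphism $\Lambda_0\cong\Gamma$, and after composing with a suitable isometry of $\H^3$ we may realise this as a faithful representation $\rho_0:\Lambda_0\to\PSL(2,\C)$ with $\rho_0(\Lambda_0)=\Gamma$. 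For each $\lambda\in\Lambda$, conjugation by $\lambda$ is an automorphism of $\Lambda_0$ which transports via $\rho_0$ to an automorphism of $\Gamma$; since $\Gamma$ has trivial centraliser in $\PSL(2,\C)$, Mostow--Prasad rigidity furnishes a unique element $g_\lambda\in\Isom(\H^3)$ realising it, and $\lambda\mapsto g_\lambda$ is a homomorphism $\rho:\Lambda\to\Isom(\H^3)$ extending $\rho_0$ whose image normalises $\Gamma$.

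Once one verifies that $\rho(\Lambda)\leq N$, Theorem \ref{t2} finishes the argument. Every finite quotient of $\rho(\Lambda)$ is a quotient of $\Lambda$, hence of $\wh{\Lambda}\cong\wh{N}$, hence of $N$; therefore Theorem \ref{t2} forbids $\rho(\Lambda)$ from being a proper subgroup of finite index in $N$, forcing $\rho(\Lambda)=N$. Since $\rho|_{\Lambda_0}$ is an isomorphism and $|\Lambda/\Lambda_0|=|N/\Gamma|$, the induced surjection $\Lambda/\Lambda_0\twoheadrightarrow N/\Gamma$ between finite groups of equal order is an isomorphism, whence $\rho^{-1}(\Gamma)=\Lambda_0$ and so $\ker\rho\leq\Lambda_0\cap\ker\rho=1$. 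Thus $\rho:\Lambda\xrightarrow{\sim}N$.

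The main obstacle is the verification that $\rho(\Lambda)\leq N$, rather than merely in the possibly larger normaliser $N_{\Isom(\H^3)}(\Gamma)$: one has to rule out orientation-reversing elements $g_\lambda$. The ingredient is that the profinite identification $\wh{\Lambda}\cong\wh{N}$ matches the outer actions of $\Lambda/\Lambda_0$ and $N/\Gamma$ on $\wh{\Lambda_0}\cong\wh{\Gamma}$, and the latter action is realised by elements of $\PSL(2,\C)$; combined with the flexibility to adjust $\rho_0$ by an automorphism of $\Gamma$, this should be enough to guarantee that the extension $\rho$ lands in $\PSL(2,\C)$.
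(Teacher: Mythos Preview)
Your route differs from the paper's and leaves open the gap you flag at the end. The paper's proof (Theorem~\ref{normaliser}) does not invoke Theorem~\ref{t2}. It runs through Proposition~\ref{p:latt-test} and Corollary~\ref{c:only-latt}: since $\wh\Lambda\cong\wh N$ has no nontrivial finite normal subgroup (Corollary~\ref{c:SL-no-N}), the finite kernel that arises in the Mostow--Prasad extension must vanish, so $\Lambda$ is itself a lattice containing (a conjugate of) $\Gamma$ normally with index $[N:\Gamma]$; one then finishes by observing that the normaliser is characterised as the \emph{unique} lattice of minimal co-volume in $\PSL(2,\C)$ in which $\Gamma$ sits normally. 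Your appeal to Theorem~\ref{t2} is thus an avoidable detour: Corollary~\ref{c:SL-no-N} already gives injectivity of $\rho$ (its kernel is finite, meeting $\Lambda_0$ trivially, hence is a finite normal subgroup of $\Lambda$), and once $\rho$ is injective and $\rho(\Lambda)\le N$, the equality $[\rho(\Lambda):\Gamma]=[\Lambda:\Lambda_0]=[N:\Gamma]$ forces $\rho(\Lambda)=N$ without further work.

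The orientation problem you raise is real, and your suggested resolution does not close it. Replacing $\rho_0$ by $\alpha\circ\rho_0$ with $\alpha\in\Aut(\Gamma)$ conjugates the induced map $\Lambda/\Lambda_0\to\Out(\Gamma)$ by the class of $\alpha$, so you can move its image by inner automorphisms of $\Out(\Gamma)\cong N_{\Isom(\H^3)}(\Gamma)/\Gamma$, but you cannot thereby force it into the orientation-preserving half $N/\Gamma$. The profinite hypothesis only tells you that this image and $N/\Gamma$ become conjugate inside $\Out(\wh\Gamma)$, with the conjugating element an arbitrary class in $\Out(\wh\Gamma)$; there is no mechanism for pulling that conjugation back into $\Out(\Gamma)$. (The paper's Proposition~\ref{p:latt-test} asserts that the lattice $\Omega$ it produces lies in $\PSL(2,\C)$ rather than merely $\Isom(\H^3)$, so the same subtlety is glossed over there --- but that does not rescue your argument.)
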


We have discussed how Theorems \ref{t1} and \ref{t2} simplify the latter stages of the proofs in \cite{BMRS1}. This 
simplification provides us with encouragement to  
search for further examples of profinitely rigid lattices $\G<{\rm{PSL}}(2,\C)$. We take up this challenge in the final sections of this paper,
where we study cyclic branched covers of the figure-eight knot and related families of groups. In particular we prove the following
theorem. 
Here, $\Vol(3)$ is a particular closed orientable hyperbolic 3-manifold whose volume is that of the regular ideal tetrahedron in $\H^3$;
it has  recently been established  \cite{GHMTY} that $\Vol(3)$ is the unique closed orientable
hyperbolic $3$-manifold with this volume, which is the third-smallest possible volume for closed manifolds.

\begin{thmA}\label{t:4-fold}
The fundamental group of the 4-fold cyclic branched cover of the figure-eight knot  is profinitely rigid, and so 
is the fundamental group of $\Vol(3)$.
\end{thmA}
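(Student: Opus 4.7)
The plan is to apply the two-stage framework of \cite{BMRS1, BMRS2}, with Theorem \ref{t1} now carrying out the second stage in a uniform manner. Let $\Gamma=\pi_1(M)$, where $M$ is either the $4$-fold cyclic branched cover of the figure-eight knot or $\Vol(3)$, and let $\Lambda$ be a finitely generated, residually finite group with $\widehat\Lambda\cong\widehat\Gamma$. Both manifolds are closed arithmetic hyperbolic $3$-manifolds commensurable with $\PSL(2,\mathcal{O}_3)$, where $\mathcal{O}_3$ is the ring of integers of $\Q(\sqrt{-3})$; in particular their invariant trace field is $\Q(\sqrt{-3})$, the same arithmetic data as for the figure-eight knot complement treated in \cite{BMRS2}.

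The first stage is to construct a Zariski-dense representation $\rho\colon\Lambda\to\PSL(2,\C)$ with image contained in $\Gamma$. Starting from an explicit presentation of $\Gamma$, I would locate the discrete faithful representation in the character variety and identify a small set of trace relations that pin it down up to Galois conjugation. Congruence reductions $\Gamma\to\PSL(2,\mathcal{O}_3/\mathfrak{p}^n)$ factor through $\widehat\Gamma$, so the isomorphism $\widehat\Lambda\cong\widehat\Gamma$ transfers these reductions to $\Lambda$; a Galois-cohomology argument along the lines of \cite{BMRS1} then assembles them into a representation of $\Lambda$ into $\PSL(2,\C)$ whose image is Zariski-dense and, after conjugation, lies in $\Gamma$.

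In the second stage, one shows that $\rho$ is surjective. Since $\Gamma$ has finite co-volume in $\PSL(2,\C)$ and $\rho(\Lambda)$ is finitely generated, Theorem \ref{t1} forces $\widehat{\rho(\Lambda)}\not\cong\widehat\Gamma$ whenever $\rho(\Lambda)<\Gamma$ is proper. But the construction of $\rho$ is arranged so that the surjection $\widehat\Lambda\twoheadrightarrow\widehat{\rho(\Lambda)}$ fits with the given isomorphism $\widehat\Lambda\cong\widehat\Gamma$ in such a way that $\widehat{\rho(\Lambda)}\cong\widehat\Gamma$, which is the desired contradiction. Hence $\rho\colon\Lambda\twoheadrightarrow\Gamma$ is onto; the Hopfian property of the finitely generated profinite group $\widehat\Gamma$ makes the induced map $\widehat\rho$ an isomorphism, and residual finiteness of $\Lambda$ then gives $\ker\rho=1$, so $\rho$ is an isomorphism.

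The hard part will be the first stage: one must exclude ``fake'' Zariski-dense representations of $\Lambda$ that agree with $\Gamma$ on all finite quotients but whose image is not conjugate into $\Gamma$. For $\Vol(3)$, this calls for an explicit presentation (derived from its arithmetic description) together with a local analysis of its character variety. For the $4$-fold cyclic branched cover of the figure-eight knot, one additionally exploits the $\Z/4$ deck-group action and the arithmetic inherited from the figure-eight knot complement, using low-dimensional cohomology of $\Gamma$ with finite coefficients to rule out the spurious components of the character variety that would correspond to a non-isomorphic $\Lambda$.
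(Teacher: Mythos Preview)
Your proposal has a substantive arithmetic error that undermines the first stage. The groups $\Gamma_4$ and $\pi_1(\Vol(3))$ are \emph{not} commensurable with the Bianchi group $\PSL(2,\mathcal{O}_3)$: their invariant quaternion algebra over $\Q(\sqrt{-3})$ is a division algebra, ramified at the primes of norm $3$ and $4$ (see \cite{Re0}), whereas the figure-eight knot complement lives in the matrix algebra. So your proposed congruence reductions $\Gamma\to\PSL(2,\mathcal{O}_3/\mathfrak{p}^n)$ do not exist as stated, and the arithmetic input is not ``the same as for the figure-eight knot complement''. What the paper actually does is prove Galois rigidity of $\Gamma_4$ by a direct Mathematica computation on the $\SL(2,\C)$ character variety of the Fibonacci presentation $F(2,8)$, then invoke the local-uniformity and type-number-$1$ properties of the ramified quaternion algebra to apply \cite[Corollary 4.10]{BMRS1} and land a representation of $\Lambda$ in $\Gamma_{\mathcal{O}}^1$; an abelianization argument then pushes the image into $\Gamma_4$ itself.

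Your second stage also has a gap: you assert that the construction arranges $\widehat{\rho(\Lambda)}\cong\widehat\Gamma$, but no construction gives this. What you actually have is a surjection $\widehat\Gamma\cong\widehat\Lambda\twoheadrightarrow\widehat{\rho(\Lambda)}$, i.e.\ every finite quotient of $\rho(\Lambda)$ is a quotient of $\Gamma$; it is then Theorem~\ref{t2} (or Proposition~\ref{p3}), not Theorem~\ref{t1}, that yields the contradiction when $\rho(\Lambda)$ is proper of finite index, while the infinite-index case is dispatched separately by a first-betti-number argument. Finally, for $\Vol(3)$ the paper takes a different and much shorter route than you propose: rather than redo a character-variety analysis, it uses the over-lattice machinery of Section~\ref{s:go-up} (Corollary~\ref{c:only-latt}), observing that $\Gamma_4$ sits in $\pi_1(\Vol(3))$ with index $2$ and that $\Vol(3)$ is the unique torsion-free lattice of its covolume containing $\Gamma_4$.
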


In order to illustrate the practical nature of the results in Section \ref{s:go-up}, we shall explain how profinite rigidity extends
to lattices containing those in  Theorem \ref{t:4-fold}; see Corollaries \ref{c:up-from-G4} and \ref{c:Omega4}.
Our results in Section \ref{s:galois} also establish {\em Galois rigidity} for a class of groups that includes these. Galois rigidity
(whose definition we recall in Section \ref{s:basic}) provides tight control on the $(\P){\rm{SL}_2}$-character variety of the groups concerned
and plays a crucial role in the first stage of the proofs in \cite{BMRS1, BMRS2}. 

A further notion of rigidity relevant to Theorem \ref{main1} is {\em Grothendieck rigidity}. 
In \cite{groth}, Grothendieck  initiated the study of isomorphisms
$\wh{u}:\wh{H}\overset{\simeq}\longrightarrow\wh{\G}$ induced by inclusions of discrete groups $u:H\hookrightarrow\G$. One calls
$(\G,H)_u$  a {\em Grothendieck pair} if $\G$ is finitely generated and residually finite and $\wh{u}$ is an isomorphism
but $u$ is not.  If there is no Grothendieck pair $(\G,H)_u$ with $H$ finitely generated, then
$\G$ is  said to be Grothendieck rigid. 
Bridson and Grunewald \cite{BG}
proved that there exist Grothendieck pairs of finitely presented groups. The analogous problem for finitely generated groups was settled earlier by Platonov and Tavgen \cite{PT}. Long and Reid  \cite{LR} established Grothendieck 
rigidity for  the  fundamental groups of all closed geometric $3$-manifolds and finite volume hyperbolic $3$-manifolds.
Theorem \ref{main1} proves something stronger in the hyperbolic case, ruling out abstract isomorphisms 
$\wh{H}\cong\wh{\G}$, not just those induced by inclusions $H\hookrightarrow\G$. The analogous stronger
statement is false in the non-hyperbolic setting \cite{hempel, funar}.

An alternative approach to Grothendieck rigidity proceeds via subgroup separability.
If $(\G,H)_u$ is a Grothendieck pair, then $H$ is dense in the profinite topology on $\G$. Thus any residually finite group that is LERF (all finitely generated subgroups are closed in the profinite topology) will be Grothendieck rigid. This observation was applied in  \cite{PT} to free groups and Fuchsian groups.  Following the work of  \cite{Ag} and \cite{Wi} one knows that all finitely generated Kleinian groups are LERF, so they
too are Grothendieck rigid.  
Recently, Sun proved that the fundamental group of every compact $3$-manifold is Grothendieck rigid \cite{Sun}. 

This paper is organised as follows. In Section 2 we gather the definitions and background material needed
in the sequel. In Section 3 we present two basic results about (virtual) surjections of profinite groups, Propositions \ref{p1} and \ref{p2}, and
illustrate their utility with some immediate applications.  In Section 4 we delve further into the significance of finite normal subgroups
in profinite completions and discuss results about extending profinite rigidity to over-lattices, including Theorem \ref{t:intro-norm}.
Section 5 contains the remainder of the proof of Theorems \ref{t1} and \ref{t2}. In Section 6 we construct a new (finite) family of
profinitely rigid lattices in ${\rm{PSL}}(2,\C)$, starting with Theorem \ref{t:4-fold}; Section 7 contains an explanation of the Mathematica calculations used to compute the character variety of the fundamental group of the $4$-fold cyclic branched cover of the figure-eight knot. 

\bigskip
  
\noindent{\bf Acknowedgements:}  We thank Nikolay Nikolov for helpful conversations in connection with Proposition \ref{p1}. We also  thank our co-authors from  \cite{BMRS1} and \cite{BMRS2}, D.~B.~McReynolds and R. Spitler,  
for many helpful discussions concerning profinite rigidity for Kleinian groups. 
While carrying out this work, we benefited from the hospitality of the Hausdorff Institute for Mathematics (Bonn), the Institute for Advanced Study (Princeton), ICMAT (Madrid) and the University of Auckland;
we thank them all. We also wish to thank the referee for some useful suggestions.  
The second author acknowledges with gratitude the financial support of the National Science Foundation.

\section{Preliminaries}\label{s2}

We shall assume that the reader is familiar with basic facts about profinite groups; the book
\cite{RZ} is a standard reference.  By definition, the {\em profinite completion} $\wh{\G}$ of a group $\G$
is the limit of the inverse system consisting of the finite quotients $\G/N$ and the natural maps $\G/N\onto \G/M$
for $N<M$. Beyond the basic theory, we shall need the
Nikolov-Segal Theorem \cite{NS} that every subgroup of finite index in a finitely
generated profinite group is open.
We shall make frequent use of the following basic result describing the correspondence between the
finite-index  subgroups of a discrete group and those of  its profinite completion
(see \cite[Proposition 3.2.2]{RZ}, and note that we have
used \cite{NS} to replace ``open'' by ``finite index'').\\[\baselineskip]
\noindent{\bf{Notation.}}  Given a subset $X$ of a profinite group
$G$, we write $\overline X$ to denote the closure of $X$ in $G$.

\begin{proposition}
\label{correspondence}
If $\G$ is a finitely generated, residually finite group, then
there is a one-to-one correspondence between the
set $\mathcal{X}$ of subgroups of finite index in $\G$ and the set $\mathcal{Y}$ of subgroups of finite index in $\widehat{\G}$.
Identifying $\G$ with its image in $\widehat{\G}$, this correspondence
is given by:

\begin{itemize}
\item For $H\in {\mathcal X}$, $H \mapsto \overline{H}$.
\item For $Y\in {\mathcal Y}$, $Y\mapsto Y \cap \G$.
\end{itemize}

\noindent If $H,K\in {\mathcal X}$ and $K<H$ then
$[H : K] = [\overline{H}:\overline{K}]$. Moreover, $K\triangleleft H$
if and only if $\overline{K} \triangleleft \overline{H}$,
and in this case $\overline{H}/\overline{K} \cong H/K$.
\end{proposition}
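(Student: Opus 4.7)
The plan is to identify finite-index subgroups of both $\G$ and $\wh\G$ with subgroups of a common finite quotient $\G/N$, for $N\ns\G$ of finite index. The substantive input is the Nikolov--Segal theorem \cite{NS}, which is what allows us to replace ``open'' by ``finite index'' in $\wh\G$; everything else is a careful application of the universal property of the profinite completion.

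First I would verify that for any finite-index normal $N\ns\G$, the natural continuous surjection $\pi_N:\wh\G\onto\G/N$ has kernel exactly $\overline N$, so that $\wh\G/\overline N\cong\G/N$. The inclusion $\overline N\subseteq\ker\pi_N$ is immediate. For the reverse, take $\g\in\ker\pi_N$ and a basic open neighbourhood $\g\overline M$ with $M\ns\G$ finite-index and $M\le N$; since $\g$ maps to the identity in $\G/N$, its image in $\G/M$ lies in $N/M$, so there exists $n\in N$ with $n\equiv\g$ modulo $\overline M$. Hence $\g\in\overline N$.

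Granting this, take $H\in\mathcal X$ and let $N\ns\G$ be its normal core, which has finite index as the kernel of the $\G$-action on $\G/H$. Then $\overline H=\pi_N^{-1}(H/N)$, so $\overline H$ has finite index in $\wh\G$ and $\overline H\cap\G=H$. Conversely, given $Y\in\mathcal Y$, Nikolov--Segal makes $Y$ open, hence $Y\supseteq\overline M$ for some finite-index normal $M\ns\G$; under $\wh\G/\overline M\cong\G/M$, the subgroup $Y/\overline M$ corresponds to some $S\le\G/M$, and both $Y$ and $\overline{Y\cap\G}$ equal $\pi_M^{-1}(S)$, so $\overline{Y\cap\G}=Y$. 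This gives the bijection.

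Finally, for the index, normality, and quotient statements, pick a finite-index normal $N\ns\G$ with $N\le K$. Under $\wh\G/\overline N\cong\G/N$, the pair $\overline K\le\overline H$ corresponds to $K/N\le H/N$, so $[\overline H:\overline K]=[H:K]$; the equivalence $K\ns H\iff \overline K\ns\overline H$ reduces to the same statement in the finite group $\G/N$; and when these hold, the induced map $H/K\to\overline H/\overline K$ is an isomorphism of finite groups, both naturally isomorphic to $(H/N)/(K/N)$. The main conceptual obstacle is the Nikolov--Segal input; the remaining steps are routine bookkeeping with the universal property.
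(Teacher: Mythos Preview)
Your proof is correct. The paper does not actually prove this proposition: it simply records it as background, citing \cite[Proposition 3.2.2]{RZ} and noting that the Nikolov--Segal theorem \cite{NS} is what allows one to upgrade ``open'' to ``finite index'' on the $\wh\G$ side. Your argument is precisely the standard one underlying that reference---reducing everything to a common finite quotient $\G/N\cong\wh\G/\overline N$ once one knows every finite-index subgroup of $\wh\G$ is open---so there is nothing to compare.

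One minor remark on presentation: in Step~2 you invoke the normal core of $H$, but in Step~3 you then apply the same computation with an arbitrary finite-index normal $M\le H$; it would be cleaner to state Step~2 in that generality from the outset (any finite-index normal $N\ns\G$ with $N\le H$ gives $\overline H=\pi_N^{-1}(H/N)$), since that is all you use.
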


The last clause implies that $\G$ and $\wh{\G}$ have the same finite quotients.
A useful refinement of this fact is the observation that the set of finite quotients of $\G$ determines $\widehat{\G}$ up to
isomorphism -- see \cite[Theorem 3.2.7]{RZ}), for example, which we repeat here
using \cite{NS} to replace {\em topological isomorphism} with {\em isomorphism} (as abstract groups).
We write $\mathcal{C}(\Gamma)$ to denote the set of isomorphism classes of finite groups onto which 
$\G$ surjects.

\begin{theorem}
\label{allfinite}
Suppose that $\Gamma_1$ and $\Gamma_2$ are finitely generated abstract
groups. Then $\widehat{\Gamma}_1$ and $\widehat{\Gamma}_2$ are isomorphic if
and only if $\mathcal{C}(\Gamma_1) = \mathcal{C}(\Gamma_2)$.
\end{theorem}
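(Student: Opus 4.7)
The plan is to handle the two directions separately. For the forward direction, suppose $\wh{\G}_1 \cong \wh{\G}_2$ as abstract groups, so that the two groups share the same set of finite abstract quotients. I would invoke the Nikolov--Segal theorem to conclude that every finite-index subgroup of $\wh{\G}_i$ is open, and hence that every abstract homomorphism from $\wh{\G}_i$ to a finite group is continuous. The final clause of Proposition \ref{correspondence} then identifies the finite continuous quotients of $\wh{\G}_i$ with those of $\G_i$, so $\mathcal{C}(\G_1) = \mathcal{C}(\G_2)$.

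For the converse, write $G_i := \wh{\G}_i$; both are topologically finitely generated profinite groups. For each open normal subgroup $N$ of $G_1$, let $S_N$ denote the set of continuous surjections $G_2 \onto G_1/N$. The hypothesis gives $G_1/N \in \mathcal{C}(\G_1) = \mathcal{C}(\G_2)$, so $S_N$ is non-empty; and since $G_2$ is topologically finitely generated, every continuous homomorphism $G_2 \to G_1/N$ is determined by the images of a fixed finite generating set, so $S_N$ is finite. The collection $\{S_N\}$ forms an inverse system over the down-directed poset of open normal subgroups of $G_1$: when $N' \subseteq N$, post-composition with $G_1/N' \onto G_1/N$ maps $S_{N'}$ into $S_N$. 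I would then apply the fact that an inverse limit of non-empty finite sets is non-empty (a consequence of Tychonoff) to produce a compatible family $(\phi_N)_N$, which assembles into a continuous surjection $\phi : G_2 \onto \varprojlim G_1/N = G_1$. By the symmetric construction there is a continuous surjection $\psi : G_1 \onto G_2$.

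The remaining task is to promote $\phi$ and $\psi$ to isomorphisms. I would form the composition $\psi \circ \phi$, a continuous self-epimorphism of the finitely generated profinite group $G_2$, and appeal to the classical fact that such groups are Hopfian: the pullback of open normal subgroups of index $n$ along a continuous epimorphism is injective, but since $G_2$ has only finitely many open subgroups of index $n$, this pullback is bijective; intersecting then shows the kernel of $\psi \circ \phi$ is trivial. Granted this, $\psi \circ \phi$ and $\phi \circ \psi$ are automorphisms, so $\phi$ and $\psi$ are themselves isomorphisms. The main obstacle is this Hopfian step; everything else is the Tychonoff/inverse-limit package around Proposition \ref{correspondence} and the universal property of $\wh{\G}_i$.
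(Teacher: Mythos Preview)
Your proof is correct and follows essentially the same route that the paper indicates. The paper does not give a self-contained proof of Theorem~\ref{allfinite}; it cites \cite[Theorem 3.2.7]{RZ} and then remarks (just after Proposition~\ref{p1}) that the result follows from Proposition~\ref{p1} together with the Hopfian property for finitely generated profinite groups. Your argument is precisely this: an inverse-limit construction produces continuous surjections in both directions, and the Hopfian property upgrades them to isomorphisms. The only cosmetic difference is that Proposition~\ref{p1} organises the inverse system along the cofinal chain $H(n)=H/I_n$ and extracts an infinite ray in a locally finite rooted tree (K\"onig's lemma), whereas you work with the full directed poset of open normal subgroups and invoke compactness directly; these are equivalent packagings of the same compactness argument. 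Your forward direction, using Nikolov--Segal to pass from abstract to topological isomorphism, is exactly how the paper justifies replacing ``topological isomorphism'' by ``isomorphism'' in the statement.
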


\subsection{Kleinian groups and Mostow-Prasad rigidity}  

A {\em Kleinian group} is, by definition, a discrete subgroup of ${\rm{PSL}}(2,\C)={\rm{Isom}}^+(\mathbb{H}^3)$. 
We shall be concerned only with
finitely generated Kleinian groups.  
A finitely generated Kleinian group is   {\em elementary} if it contains an abelian
subgroup of finite index; otherwise it is {\em non-elementary}.
The {\em co-volume} of a Kleinian group $\G$ is the Riemannian volume of the quotient
orbifold ${\mathbb{H}}^3/\G$. Thus Kleinian groups of finite co-volume are precisely the lattices in ${\rm{PSL}}(2,\C)$.

Mostow \cite{Mo} (for uniform lattices) and Prasad \cite{Pr} (covering the non-uniform case) proved that if $G$ is
a semisimple Lie group with trivial  center and no compact factors, and $G$ is not locally isomorphic to $\PSL(2,\R)$,  
then any pair of lattices $\G_1, \G_2<G$ that are  isomorphic as abstract groups must be conjugate in $G$. We shall make frequent use
of this fact in the case $G={\rm{PSL}}(2,\C)$.
 
\subsection{First $\ell_2$-betti numbers} 
The first $\ell_2$-betti number of a finitely presented group $\G$ can be calculated (or defined) using L\"{u}ck approximation:
$b_1^{(2)}(\G)=\lim_n b_1(H_n)/[\G:H_n]$, where 
$b_1(H)$ is the standard first betti number and $(H_n)$
is a nested sequence of finite-index normal subgroups of $\G$ intersecting in the identity. The first assertion in 
the following lemma is immediate from this description and the second is \cite[Corollary 2.11]{BCR}.

\begin{lemma}\label{l2}
Let $\G_1$ and $\G_2$ be finitely presented groups.
\begin{enumerate}
\item If $H<\G_1$ is a subgroup of finite index $d$, then  $b_1^{(2)}(H) = d\ b_1^{(2)}(\G)$.
\item If $\wh{\G}_1 \cong \wh{\G}_2$ then $b_1^{(2)}(\G_1) = b_1^{(2)}(\G_2)$.
\end{enumerate}
\end{lemma}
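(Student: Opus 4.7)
For part (1) the plan is to apply L\"uck approximation along a common chain in $\G_1$ and $H$. Since $\G_1$ is finitely generated and residually finite, pick a nested sequence $\{K_n\}$ of finite-index normal subgroups of $\G_1$ with $\bigcap_n K_n = \{1\}$; after intersecting each $K_n$ with the normal core of $H$ in $\G_1$, we may assume $K_n < H$. Each $K_n$ is then normal in $H$ and satisfies $[\G_1:K_n] = d\,[H:K_n]$, so running L\"uck approximation for both groups through the same chain gives
\[
b_1^{(2)}(H) \;=\; \lim_n \frac{b_1(K_n)}{[H:K_n]} \;=\; d \lim_n \frac{b_1(K_n)}{[\G_1:K_n]} \;=\; d\, b_1^{(2)}(\G_1).
\]

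For part (2) the plan is to transport a residual chain across the isomorphism $\wh{\G}_1 \cong \wh{\G}_2$ by means of Proposition \ref{correspondence}. Choose $K_n^{(1)} \ns \G_1$ cofinal in the (countable) poset of finite-index normal subgroups of $\G_1$, and let $K_n^{(2)} \ns \G_2$ be the corresponding chain under the bijection induced by $\wh{\G}_1 \cong \wh{\G}_2$. The correspondence preserves index and inclusion, so $\{K_n^{(2)}\}$ is again cofinal in the finite-index normal subgroups of $\G_2$ and hence $\bigcap_n K_n^{(2)} = \{1\}$ by residual finiteness of $\G_2$. For finite-index subgroups, the closure in the ambient profinite completion agrees with the subgroup's own profinite completion, so Proposition \ref{correspondence} yields $\wh{K_n^{(1)}} \cong \wh{K_n^{(2)}}$; in particular these subgroups share their finite abelian quotients. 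Since for any finitely generated group $K$ one has $b_1(K) = \dim_{\F_p}(K^{\ab} \otimes \F_p)$ whenever the prime $p$ exceeds the order of torsion in $K^{\ab}$, we deduce $b_1(K_n^{(1)}) = b_1(K_n^{(2)})$ for every $n$. Combined with $[\G_1:K_n^{(1)}] = [\G_2:K_n^{(2)}]$, L\"uck approximation gives $b_1^{(2)}(\G_1) = b_1^{(2)}(\G_2)$.

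The main subtlety lies in the choice of chain in part (2): an arbitrary residual chain in $\G_1$ need not transport to a chain with trivial intersection in $\G_2$, since the correspondence of Proposition \ref{correspondence} is at the level of finite-index subgroups rather than infinite intersections. Insisting on cofinality in the lattice of finite-index normal subgroups circumvents this issue and is the essential use of the profinite hypothesis beyond mere residual finiteness.
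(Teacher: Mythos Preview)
Your argument is correct and is essentially what the paper has in mind: for part (1) the paper says only that the claim is ``immediate'' from L\"uck approximation, and your common-chain computation is exactly that immediate argument; for part (2) the paper does not give a proof at all but simply cites \cite[Corollary 2.11]{BCR}, whose proof is the standard one you have written out---transport a cofinal residual chain through the correspondence of Proposition~\ref{correspondence}, note that $b_1$ of each term is a profinite invariant, and apply L\"uck approximation on both sides. Your final paragraph correctly identifies the one point that requires care, namely that an arbitrary residual chain need not transport to a residual chain, while a cofinal one does.
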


\subsection{Goodness and torsion} 
Following Serre \cite{Se}, one says that a group $\G$ is {\em good} 
if for every finite $\G$-module $M$, the homomorphism of cohomology groups
$H^n(\widehat{\G};M)\rightarrow H^n(\G;M)$
induced by the natural map $\G\rightarrow \widehat{\G}$ is an isomorphism
between the cohomology of $\G$ and the continuous cohomology of $\widehat{\G}$. 
(See \cite{Se} and \cite[Chapter 6]{RZ} for details about the cohomology of profinite groups.) 

Fuchsian groups are good \cite{GJZ}. Serre  \cite[Chapter 1, Section 2.6]{Se} gives a criterion for
when extensions of good groups by good groups are good; by applying this, one sees that
the fundamental groups of surface bundles over the circle are good. Goodness is an invariant of
commensurability, so it follows from Agol's Virtual Fibering Theorem \cite{Ag} that all lattices in ${\rm{SL}}(2,\C)$ and
${\rm{PSL}}(2,\C)$ are good. More generally, all Kleinian groups are good; see \cite[p.102]{AFW}.
 The most important consequence of this for us in the present paper
comes from the following well-known fact (which is \cite[Corollary 2.16]{BCR}).

\begin{lemma}\label{l:good-tf}
If a finitely generated, residually finite group $G$ is good and has finite cohomological dimension,
then $\widehat{G}$ is torsion-free.
\end{lemma}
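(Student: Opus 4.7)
The plan is to argue by contradiction: assuming $\wh{G}$ contains a non-trivial element of finite order, one produces a closed cyclic subgroup of prime order whose continuous $\F_p$-cohomology is non-zero in arbitrarily high degrees, contradicting the vanishing forced by goodness plus finite cohomological dimension of $G$.

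First, I would reformulate finite cohomological dimension on the profinite side. Let $d = \cd(G) < \infty$, so $H^n(G; M) = 0$ for every $G$-module $M$ and every $n > d$. A finite abelian group with a continuous $\wh{G}$-action is the same data as a finite $G$-module, since the action factors through a finite quotient of $G$. Hence goodness supplies isomorphisms
\[
H^n_{\mathrm{cts}}(\wh{G}; M) \;\cong\; H^n(G;M) \;=\; 0
\quad\text{for every finite $\wh{G}$-module $M$ and every $n > d$.}
\]
In particular, for every prime $p$, the mod-$p$ cohomological dimension satisfies $\cd_p(\wh{G}) \leq d$.

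Next, suppose for contradiction that $\wh{G}$ has a non-trivial torsion element. Then some closed cyclic subgroup $C \leq \wh{G}$ has prime order $p$. I would invoke the standard fact (see \cite{Se}, or \cite[Chapter 7]{RZ}) that for a closed subgroup $C$ of a profinite group, $\cd_p(C) \leq \cd_p(\wh{G})$; combined with the previous paragraph this gives $\cd_p(C) \leq d < \infty$.

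The final step is to observe that this is absurd: for $C \cong \Z/p\Z$, the continuous cohomology $H^*_{\mathrm{cts}}(C; \F_p)$ agrees with the ordinary group cohomology, which is non-zero in every degree (a polynomial algebra $\F_p[x]$ for $p=2$, and $\F_p[x] \otimes \Lambda(y)$ for $p$ odd). Hence $\cd_p(C) = \infty$, giving the desired contradiction, so $\wh{G}$ must be torsion-free. The only step that requires any care is the translation between goodness (stated for discrete $G$-modules) and the vanishing of continuous $\F_p$-cohomology of $\wh{G}$ in high degrees; everything else is a direct application of standard profinite cohomology.
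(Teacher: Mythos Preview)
Your proof is correct and is the standard argument. The paper does not actually supply its own proof of this lemma; it simply records it as a well-known fact and cites \cite[Corollary 2.16]{BCR}, so there is no in-paper argument to compare against. Your route---using goodness to transfer the vanishing of $H^n(G;M)$ for $n>\cd(G)$ to the continuous cohomology of $\wh{G}$ with finite coefficients, then invoking the monotonicity of $\cd_p$ under passage to closed subgroups together with $\cd_p(\Z/p\Z)=\infty$---is precisely the expected one.
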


\begin{corollary}\label{c:good-SL}
If $\G$ is a finitely generated, discrete torsion-free subgroup of  ${\rm{PSL}}(2,\C)$, then $\wh{\G}$ is torsion-free.
\end{corollary}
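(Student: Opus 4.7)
The plan is to deduce the corollary as an immediate application of Lemma \ref{l:good-tf}. That lemma takes as input three properties of a group $G$: it must be finitely generated, residually finite, good, and of finite cohomological dimension. So the task is simply to verify each of these for an arbitrary finitely generated, discrete, torsion-free subgroup $\G<\PSL(2,\C)$.

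Finite generation is assumed. Residual finiteness is automatic because $\G$ is a finitely generated linear group over $\C$, hence residually finite by Mal\cprime cev's theorem; alternatively, one can appeal to the LERF property of finitely generated Kleinian groups established by Agol \cite{Ag} and Wise \cite{Wi}, already invoked earlier in the introduction. Goodness is precisely the content of the paragraph immediately preceding Lemma \ref{l:good-tf}, where the authors record that all Kleinian groups are good (a statement attributed to \cite{AFW} and ultimately resting on Agol's Virtual Fibering Theorem together with Serre's criterion for goodness of extensions).

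The remaining point is finite cohomological dimension. Here the torsion-free hypothesis is essential: because $\G\subset\PSL(2,\C)=\Isom^+(\H^3)$ is discrete and torsion-free, it acts freely and properly discontinuously on $\H^3$. The quotient $\H^3/\G$ is therefore an aspherical $3$-manifold that is a $K(\G,1)$, and consequently $\cd(\G)\leq 3$.

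With these three hypotheses in hand, Lemma \ref{l:good-tf} applies directly and yields that $\wh{\G}$ is torsion-free. There is no real obstacle to overcome: the corollary is essentially a packaging of existing results, and the only point that requires any thought is noticing that torsion-freeness of $\G$ is exactly what converts the discrete action on $\H^3$ into a free one, which is needed to bound $\cd(\G)$.
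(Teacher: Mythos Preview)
Your proposal is correct and follows exactly the approach implicit in the paper: the corollary is stated immediately after Lemma \ref{l:good-tf} with no separate proof, the understanding being that goodness was recorded in the preceding paragraph, residual finiteness comes from linearity, and the torsion-free discrete action on $\H^3$ gives an aspherical $3$-manifold quotient, hence $\cd(\G)\le 3$. There is nothing to add.
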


We shall also need a variant of Lemma \ref{l:good-tf} that allows for limited torsion.
The following is a weak form of a result of Minasyan and Zalesskii \cite[Corollary 3.5]{MZ}.

\begin{proposition}\label{p:no-normal} Let $\G$ be a finitely generated, residually finite 
group that has a subgroup of finite index  with finite cohomological dimension. If $\G$ is good, then every element
$g\in\wh{\G}$ of prime order is conjugate into $\G$.
\end{proposition}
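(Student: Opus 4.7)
The plan is to combine the cohomological content of goodness with Lemma \ref{l:good-tf} applied to a finite-index normal subgroup, reducing the proposition to a conjugacy statement for finite $p$-subgroups that is the essential content of the Minasyan--Zalesskii theorem being cited.

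First I would replace the given finite-index subgroup by the intersection of its $\G$-conjugates, obtaining a normal subgroup $\G_0 \trianglelefteq \G$ of finite index with $\cd(\G_0) < \infty$. Since goodness passes to finite-index subgroups (Serre, \cite[Chapter 1, \S2.6]{Se}), $\G_0$ is good, and Lemma \ref{l:good-tf} then gives that $\wh{\G_0}$ is torsion-free. Writing $Q = \G/\G_0$ and using Proposition \ref{correspondence} to identify $\wh{\G}/\wh{\G_0}$ with $Q$, we see that every finite subgroup of $\wh{\G}$ projects isomorphically onto a subgroup of $Q$.

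Now let $g \in \wh{\G}$ have prime order $p$, and enlarge $\langle g \rangle$ to a Sylow pro-$p$ subgroup $P$ of $\wh{\G}$; the previous paragraph forces $P$ to be finite. Let $\bar P$ denote its image in $Q$, and let $\tilde P \leq \G$ be the preimage of $\bar P$, so that we have parallel short exact sequences
\[
1 \to \G_0 \to \tilde P \to \bar P \to 1 \quad\text{and}\quad 1 \to \wh{\G_0} \to \wh{\tilde P} \to \bar P \to 1,
\]
with $\wh{\tilde P}$ and $P$ both sitting inside $\wh{\G}$ above $\bar P$. The task reduces to showing that some $\wh{\G}$-conjugate of $P$ lies in $\tilde P$; granting this, the corresponding conjugate of $g$ lies in $\G$.

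The main obstacle --- and the genuine content of \cite[Corollary 3.5]{MZ} --- is this final conjugacy assertion. It is here that goodness is used cohomologically: the identification $H^*(\wh{\G}; \F_p) \cong H^*(\G; \F_p)$, combined with the Lyndon--Hochschild--Serre spectral sequences of the two extensions above and a detection/transfer argument, produces an element of $\wh{\G}$ that conjugates $P$ into $\tilde P$. This cohomological step is delicate and is where the full strength of the Minasyan--Zalesskii argument is used; the structural reduction above is formal, but upgrading it to an honest conjugation into $\G$ genuinely requires their detection theorem.
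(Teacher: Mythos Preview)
The paper does not prove this proposition at all; it simply records it as a weak form of \cite[Corollary~3.5]{MZ}. So your proposal is not being compared against an argument in the paper but against a bare citation, and in the end your proposal also defers the essential step to \cite{MZ}. In that sense the two agree.

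That said, your reduction contains a genuine error. From the fact that $\wh{\G_0}$ is torsion-free you conclude that a Sylow pro-$p$ subgroup $P$ of $\wh\G$ must be finite. This is false: torsion-freeness rules out nontrivial \emph{finite} $p$-subgroups but not infinite pro-$p$ subgroups. For instance, if $\G_0=\Z$ then $\wh{\G_0}=\wh\Z$ is torsion-free yet contains $\Z_p$. In general $P\cap\wh{\G_0}$ is an open pro-$p$ subgroup of $P$ lying in $\wh{\G_0}$, and nothing you have said forces it to be trivial. The enlargement to a Sylow subgroup is therefore both unjustified and unnecessary: you should work directly with the finite cyclic group $F=\langle g\rangle$, which already injects into $Q$, take $\tilde F\le\G$ to be the preimage of its image, and observe that $F$ is a complement to $\wh{\G_0}$ in $\wh{\tilde F}$. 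The question then becomes whether every such complement is $\wh{\tilde F}$-conjugate (hence $\wh\G$-conjugate) to one contained in $\tilde F$; this is exactly what \cite{MZ} establishes using goodness, and you are right that this is where the real work lies.
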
 
 
\begin{corollary}\label{c:SL-no-N}
If $\G<{\rm{PSL}}(2,\C)$ is a finitely generated non-elementary Kleinian group, then $\wh{\G}$ does not contain a 
non-trivial, finite normal subgroup.
\end{corollary}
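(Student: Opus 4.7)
The plan is to combine Proposition \ref{p:no-normal} with the geometric fact that centralizers of elliptic elements in $\PSL(2,\C)$ are abelian. The key observation is that a finite normal subgroup of $\wh{\G}$ produces an element of $\G$ with finite conjugacy class in $\wh{\G}$, which in turn forces $\G$ to be virtually abelian.

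First I would verify that Proposition \ref{p:no-normal} applies to $\G$. The group $\G$ is finitely generated and residually finite by assumption, and is good since all Kleinian groups are good (as discussed before Lemma \ref{l:good-tf}). By Selberg's lemma, $\G$ has a torsion-free subgroup $\G_0$ of finite index, and $\G_0$ acts freely on $\H^3$, so $\cd(\G_0)\le 3$. Thus the hypotheses of Proposition \ref{p:no-normal} are satisfied.

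Next, suppose for a contradiction that $N\ns\wh{\G}$ is a non-trivial finite normal subgroup. Pick $g\in N$ of prime order. By Proposition \ref{p:no-normal}, there exists $x\in\wh{\G}$ with $\gamma:=xgx^{-1}\in\G$. Since $N$ is normal in $\wh{\G}$, we have $\gamma\in N$, so $\gamma$ is a non-trivial torsion element of $\G$ whose $\wh{\G}$-conjugacy class is contained in the finite set $N$. Hence the centralizer $C_{\wh{\G}}(\gamma)$ has finite index in $\wh{\G}$, and by Proposition \ref{correspondence} the subgroup $C_\G(\gamma)=C_{\wh{\G}}(\gamma)\cap\G$ has finite index in $\G$.

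Finally, since $\gamma$ is a torsion element of the Kleinian group $\G$, it is an elliptic isometry of $\H^3$, fixing an axis. The centralizer of $\gamma$ in $\PSL(2,\C)$ consists of the orientation-preserving isometries preserving that axis and its rotation angle, which is an abelian Lie group isomorphic to $S^1\times\R$. Therefore $C_\G(\gamma)$, being a discrete subgroup, is virtually cyclic (and in particular abelian). This exhibits $\G$ as virtually abelian, contradicting the hypothesis that $\G$ is non-elementary. The only step that requires any delicacy is the translation from ``finite normal subgroup'' to ``element of $\G$ with finite $\wh{\G}$-conjugacy class,'' which is precisely what Proposition \ref{p:no-normal} and the normality of $N$ deliver; the geometric conclusion is then immediate.
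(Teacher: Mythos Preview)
Your argument is correct and closely parallels the paper's. Both proofs invoke Proposition~\ref{p:no-normal} to land a non-trivial torsion element $\gamma$ inside $N\cap\G$, and then use elementary hyperbolic geometry to force $\G$ to be elementary. The paper works with the whole finite normal subgroup $H=N\cap\G\ns\G$: its fixed-point set ${\rm{Fix}}(H)\subset\H^3$ is a proper totally geodesic submanifold that is $\G$-invariant, and a case analysis on its dimension shows $\G$ must be elementary. You instead observe that the $\G$-conjugacy class of the single element $\gamma$ is contained in the finite set $N$, so $C_\G(\gamma)$ has finite index; since this centralizer preserves the axis of $\gamma$, the conclusion follows. (Incidentally, you do not need the detour through $C_{\wh\G}(\gamma)$ and Proposition~\ref{correspondence}: the $\G$-conjugacy class already sits in $N$, so $[\G:C_\G(\gamma)]\le |N|$ directly.)

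Two small corrections. First, when $\gamma$ has order $2$ its centralizer in $\PSL(2,\C)$ is not abelian: in addition to the torus $\C^*$ of elements preserving the oriented axis, it contains the involutions that flip the axis (in matrix terms, it is the normalizer of the diagonal torus, $\C^*\rtimes\Z/2$). This group is still virtually abelian, so your conclusion is unaffected. Second, ``virtually cyclic'' does not imply ``abelian'' (the infinite dihedral group is a counterexample); what you need, and what you correctly state in the next sentence, is merely that $C_\G(\gamma)$---hence $\G$---is virtually abelian.
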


\begin{proof}
A non-trivial finite normal subgroup $N<\wh{\G}$ would contain an element $g$ of prime order. Proposition \ref{p:no-normal}
tells us that the conjugacy class of $g$, which is contained in $N$, must intersect $\G$. Thus $H=N\cap\G$ is a non-trivial finite
normal subgroup of $\G$. But $\G$ does not contain such subgroups, because if it did then the non-empty, totally-geodesic 
submanifold ${\rm{Fix}}(H)\subset\H^3$ would be invariant under $\G$; this submanifold cannot be 2-dimensional, because 
$\G$ (hence $H$) acts effectively and does not contain orientation-reversing isometries; and if ${\rm{Fix}}(H)$ were a line
or a point, then the discrete action of $\G$ on it would imply that $\G$ was virtually cyclic, hence elementary.
\end{proof}

\begin{remark}
There is an alternative proof of Corollary \ref{c:SL-no-N} wherein one argues
 as in the proof of Theorem 1.2 of \cite{LLR} that for infinitely many finite fields $\mathbb{F}$, $\G$ maps onto the
 finite simple group ${\PSL}(2,\mathbb{F})$ with torsion-free kernel. As these groups are simple
 and unbounded in size, for infinitely many of the induced maps $\wh{\G}\to {\PSL}(2,\mathbb{F})$, any
 finite normal subgroup $N<\wh{\G}$ would be in the kernel. But this would contradict Corollary \ref{c:good-SL},
 because the kernel of $\wh{\G}\to {\PSL}(2,\mathbb{F})$ is the completion of the kernel of $\G\to {\PSL}(2,\mathbb{F})$, which is torsion-free.
 \end{remark}
 
\subsection{Trace fields, character varieties, and Galois rigidity} 
In order to fully understand the construction of profinitely rigid groups in the penultimate
 section of this paper, the reader will need to be familiar with
the basic arithmetic theory of Kleinian groups, as described in \cite{MR}, and to have some familiarity with the main 
ideas in our papers with McReynolds and Spitler \cite{BMRS1,BMRS2}, where we proved
that certain Kleinian and Fuchsian groups are profinitely rigid. For the convenience of the reader,
we recall the basic terminology here. 

Let $\G$ be a finitely generated subgroup of $\PSL(2,\mathbb{C})$ and  let  $\G_1$ be its pre-image  in $\SL(2,\mathbb{C})$.
It will be convenient to say $\G$ is {\em Zariski dense} in $\PSL(2,\C)$ when what we actually mean is that $\G_1$ is Zariski dense in $\SL(2,\C)$. 
The \textit{trace-field} of $\G$ is defined to be the field 
\[ K_\G=\mathbb{Q}(\mathrm{tr}(\gamma)~\colon~ \gamma \in \G_1). \] 
The algebra consisting of finite $K_\G$-linear combinations
of the matrices $\gamma\in\G_1$  is a quaternion algebra, denoted $A_0\G$; see \cite[Chap 3]{MR}.  
If $\G$ is a lattice, then $K_\G$ is a number field, of degree $n_K$ say. 
 If $K_\G=\Q(\theta)$ for some algebraic number $\theta$, then the Galois conjugates of $\theta$, say $\theta=\theta_1,\dots,\theta_{n_K}$ provide embeddings $\sigma_i\colon K_\G\to\C$ defined by $\theta\mapsto\theta_i$.  These in turn can be used to build $n_K$ Zariski dense non-conjugate representations $\rho_{\sigma_i}\colon \G \to \PSL(2,\C)$ with the property that $\tr(\rho_{\sigma_i}(\gamma))=\sigma_i(\tr\rho(\gamma))$ for all $\gamma\in \G$. These are the {\em Galois conjugates} of the inclusion map $\G\hookrightarrow \mathrm{PSL}(2,\mathbb{C})$. 

As in \cite{BMRS1} we will be most interested in groups $\G$ with the fewest possible Zariski dense representations.  
We say that $\G$  is {\em Galois rigid} if, up to conjugacy, the only Zariski dense representations of $\G$ in ${\rm{PSL}}(2,\C)$
are the Galois conjugates of the inclusion.  (In \cite{BMRS1} 
we defined Galois rigidity for more general finitely generated groups, but are interests here are narrower.) 
The crucial results relating Galois rigidity to profinite rigidity are Theorem 4.8 and Corollaries 4.10 and 4.11 of  \cite{BMRS1}.

\section{Two basic results}\label{s:basic}

In this section we present two general results concerning (virtual) epimorphisms of profinite groups. 
The proofs that we shall present for the theorems described in the introduction  
will illustrate the utility of these results. 
We expect these results to have many further applications in the
context of profinite rigidity.

\subsection{Profinite surjections}\label{prof_surject}

We begin with a one-sided version of Theorem \ref{allfinite}. 

\begin{proposition}\label{p1} If $G$ and $H$ are finitely generated profinite groups
and every finite quotient of $H$ is also a quotient of $G$, then there is a
continuous surjection $\wh{G}\onto\wh{H}$. 
\end{proposition}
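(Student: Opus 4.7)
The plan is to realize $H$ as the inverse limit $H = \ilim_N H/N$ over the directed system of open normal subgroups $N \ns_o H$ (ordered by reverse inclusion), and to build a compatible family of continuous epimorphisms $\phi_N : G \onto H/N$ by a compactness argument; these will then assemble into the desired continuous surjection.

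First, for each open normal subgroup $N \ns_o H$, consider the set $E_N$ of continuous epimorphisms $G \onto H/N$. Because $G$ is topologically finitely generated (say by $d$ elements), $E_N$ embeds into the finite set $(H/N)^d$ and so is finite; and it is non-empty by the standing hypothesis that every finite quotient of $H$ is also a quotient of $G$. (By the Nikolov--Segal theorem, continuous and abstract epimorphisms coincide here, so there is no ambiguity in how one interprets the hypothesis.) For $M \subseteq N$, post-composition with the canonical projection $H/M \onto H/N$ gives a map $\pi_{M,N} : E_M \to E_N$, and these are visibly compatible. Thus $(E_N, \pi_{M,N})$ is an inverse system of non-empty finite sets indexed by a directed poset.

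Second, invoke the standard fact that the inverse limit of a non-empty inverse system of non-empty finite sets over a directed poset is non-empty: one gives each $E_N$ the discrete topology, embeds $\ilim E_N$ as a closed subset of the compact product $\prod_N E_N$, and observes that the compatibility conditions form a family of closed sets with the finite intersection property. Choose $(\phi_N)_N \in \ilim E_N$, a coherent family of continuous epimorphisms $\phi_N : G \onto H/N$.

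Finally, by the universal property of $H = \ilim H/N$, the family $(\phi_N)$ induces a continuous homomorphism $\phi : G \to H$. Its image $\phi(G)$ is compact (as $G$ is) and therefore closed in $H$, while it surjects onto every finite quotient $H/N$, so it is also dense in $H$; hence $\phi(G) = H$. The only step demanding any real thought is the compactness/inverse-limit argument above, where one needs both the \emph{non-emptiness} of each $E_N$ (from the hypothesis) and its \emph{finiteness} (from topological finite generation of $G$) to guarantee a globally coherent system; the rest is a routine application of the universal property and the closedness of continuous images of compact spaces.
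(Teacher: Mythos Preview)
Your proof is correct and takes essentially the same approach as the paper's: both construct a coherent system of epimorphisms from $G$ onto the finite quotients of $H$ by a compactness argument and then pass to the inverse limit. The paper phrases the compactness step as a K\"onig's-lemma argument on a locally finite rooted tree indexed by the characteristic quotients $H(n)=H/\bigcap_{[H:K]\le n}K$, whereas you invoke the equivalent fact that an inverse limit of non-empty finite sets over the full directed system of open normal subgroups is non-empty; this is a cosmetic difference.
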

 
\begin{proof} The proposition is trivial if $H$ is finite, so suppose $H$ is infinite.
For each positive integer $n$ we denote by $I_n$ the intersection of all subgroups in $H$ that have index at most $n$,
and we consider the quotient 
$$H(n) := H/I_n.$$
There is an obvious surjection $H(n+1)\onto H(n)$ and $H$ is the
inverse limit of the system $\{H(n+1)\onto H(n)\}_n$.

Let $T(n)$ be the set of (necessarily open) normal subgroups $L\le G$ such that 
$G/L\cong H(n)$. Let $T=\bigcup_n T(n)$.
Note that $T(1)=G$ and $T(n)$ is non-empty since $H(n)$ is a quotient of $G$.
Each $L\in T({n})$ is contained in some  $L' \in T(n-1)$, namely the kernel of 
the composition $G\to G/L\cong H(n)\to H(n-1)$. We say 
$L$ is a descendant of $L'$ and draw an arrow from $L'$ to $L$. This defines
a connected, locally-finite directed graph with root $G$ and vertex set $T$. 
Let $G=L_1 > L_2>L_3>\dots$ be an infinite directed path in this graph
with $G/L_n \cong H(n)$  for all $n$. The quotient
of $G$ by the closed subgroup $N= \bigcap_n L_n$
is isomorphic to the inverse limit of the $H(n)$,
which is $H$.
\end{proof}

\begin{remark} 
Theorem \ref{allfinite} follows easily from Proposition \ref{p1} and the fact that finitely generated profinite groups
are Hopfian, i.e. every continuous surjection $G\to G$ is an isomorphism (as explained in the 
last paragraph of the proof of Proposition \ref{p2} below).
\end{remark}

Proposition \ref{p1} is useful in much the same way as Theorem \ref{allfinite},
particularly in combination with Proposition \ref{correspondence}.

\subsection{The cofinite Hopf property}\label{cofiniteHopf}
 
The second general tool that we gather comes from the observation 
that Hirshon's proof \cite{hirshon} of the discrete analogue of the following proposition carries over to the profinite setting.

\begin{proposition}\label{p2} If $G$ is a finitely generated profinite group and  $H<G$ is a subgroup of finite index, then every epimorphism $\alpha: G\to H$ has finite kernel; in particular, if $G$ has no finite normal subgroup then $\alpha$ is injective.
\end{proposition}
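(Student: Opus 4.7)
The plan is to bound $|\ker\alpha|$ by $d := [G:H]$ by tracking the image of $K := \ker\alpha$ in every finite continuous quotient of $G$ and then taking an inverse limit. Once $K$ is shown to be finite, the ``in particular'' clause is immediate: $K$ is then a finite normal subgroup of $G$, so the hypothesis that $G$ has no nontrivial finite normal subgroup forces $K=1$, making $\alpha$ injective.

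First I would replace $\alpha$ by the continuous endomorphism $\beta := \iota\circ\alpha\colon G\to G$, where $\iota\colon H\hookrightarrow G$ is the inclusion; then $\beta$ has image $H$ and kernel $K$. For each integer $k\ge 1$, let $N_k$ denote the intersection of all open normal subgroups of $G$ of index at most $k$. Because $G$ is topologically finitely generated, there are only finitely many such subgroups, so $N_k$ is itself open and characteristic, and the family $\{N_k\}_{k\ge 1}$ is cofinal among the open normal subgroups of $G$.

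The key observation is that $\beta^{-1}$ preserves the class of open normal subgroups of index at most $k$: for any such $L$, surjectivity of $\beta$ onto $H$ and the standard isomorphism theorems give $[G:\beta^{-1}(L)] = [H:H\cap L] = [HL:L] \le [G:L]\le k$. Consequently $\beta(N_k)\subseteq N_k$, and $\beta$ descends to an endomorphism $\bar\beta\colon G/N_k\to G/N_k$ of a finite group, whose image is $HN_k/N_k$. Therefore $|\ker\bar\beta| = [G:HN_k]$, which divides $d$. Since $\beta(K)=1\subseteq N_k$, the subgroup $KN_k/N_k$ is contained in $\ker\bar\beta$, and so $|KN_k/N_k|\le d$ for every $k$.

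Because $\{N_k\}$ is cofinal, $K = \varprojlim_k KN_k/N_k$ as topological groups. A directed inverse limit of finite sets of cardinality at most $d$ has cardinality at most $d$: any $d+1$ distinct elements of the limit would be pairwise separated at a common upper bound in the directed index set, forcing that finite quotient to contain more than $d$ elements. Hence $|K|\le d$, as required. The step that demands the most care is verifying that the cofinal family $\{N_k\}$ is $\beta$-invariant, which is where finite generation of $G$ is used via the finiteness of the number of open subgroups of each index; the final pigeonhole bound on the inverse limit is then routine.
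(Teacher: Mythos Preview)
Your proof is correct and takes a genuinely different route from the paper's. The paper follows Hirshon's iteration argument: it considers the descending chain $H_n=\alpha^n(G)$, shows that the induced maps on the finite quotients $H_n/N_n\to H_{n+1}/N_{n+1}$ (where $N_n$ is the core of $H_{n+1}$ in $H_n$) are eventually isomorphisms, and then, assuming $\ker\alpha$ infinite, manufactures infinitely many distinct continuous maps from $G$ to a fixed finite group, contradicting finite generation. Your argument instead establishes directly that the characteristic open subgroups $N_k$ are $\beta$-invariant, bounds the image of $K$ in each finite quotient $G/N_k$ by $d=[G:H]$, and passes to the inverse limit. Your approach is shorter and yields the sharper quantitative conclusion $|\ker\alpha|\le[G:H]$; the paper's iteration, on the other hand, produces the extra information (recorded in the remark immediately after the proof) that the restrictions $\alpha|_{H_n}\colon H_n\to H_{n+1}$ are isomorphisms for all sufficiently large $n$, independent of any hypothesis on finite normal subgroups, and this is what underlies the Hopfian property invoked elsewhere in the paper.
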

 
\begin{proof}
Let $\alpha : G\onto H$ be an epimorphism, define 
$H_n := \alpha^n(G)$, let $\theta_n : H_n\onto H_{n+1}$ be the restriction of $\alpha$ and let $K_n=\ker \theta_n$. 
Note that $H_{n+1} < H_n$ and that $H_n$ has finite index in $G$. Also, $K_{n+1} < K_n = H_n\cap \ker \alpha$.

Let $N_n$ be the core of $H_{n+1}$ in $H_n$, that is, the intersection of the conjugates $g^{-1}H_{n+1}g$ with $g\in H_n$. Then $\theta_n(N_n) 
\subseteq N_{n+1}$, so $\theta_n$
induces an epimorphism $\overline\theta_n: H_n/N_n\onto H_{n+1}/N_{n+1}$. As the groups $H_i/N_i$ are finite, for sufficiently large $n$, say $n\ge n_0$, the map
$\overline\theta_n$ must be an isomorphism.  This forces $K_n < N_n$. But $N_n < H_{n+1}$, and by definition $K_i = H_i\cap \ker \alpha$,
 so $K_n = K_{n+1}$ for all $n\ge n_0$.

In order to obtain a contradiction, we assume that $\ker \alpha$ is infinite. In this case, $K_{n_0} = H_{n_0}\cap \ker\alpha$ is infinite, in particular non-trivial. Let $k$ a non-trivial element of $K_{n_0}$. As
$K_{n_0} = K_n < H_n$ for all $n\ge n_0$, there exist elements $k_n\in G$ such that $\alpha^n(k_n)=k$. Note that $\alpha^{n+1}(k_n) = \alpha(k) = 1$.

As $G$ is residually finite, there is a finite quotient $\pi: G\to Q$ such that $\pi(k)\neq 1$. The contradiction that we seek is obtained by noting that the
maps $p_n = \pi\circ \alpha^n : G\to Q$ are all distinct, because $p_n(k_n) \neq 1$ but $p_m(k_n)=1$ for all $m>n$; this is nonsense because there are only finitely many
maps from a finitely generated profinite group to a finite group. 
\end{proof}

\begin{remark}\label{rem_iso} The above argument shows that 
$\theta_n$ is an isomorphism for sufficiently large $n$, regardless of whether $G$ contains finite normal subgroups.
\end{remark}

The following result is an immediate
consequence of Propositions \ref{p1} and \ref{p2}.  Recall that, given a class
of groups $\mathcal G$, a group $\G\in \mathcal G$ is said to be {\em profinitely rigid in $\mathcal G$} if 
$\widehat{\Lambda}\cong \widehat{\G}$  implies $\Lambda\cong\G$ provided $\Lambda\in\mathcal G$.

\begin{corollary}\label{c2} Let $\mathcal G$ be a class of finitely generated
groups that is closed under passage to subgroups of finite index.
Suppose that $\G\in\mathcal{G}$ is profinitely rigid in $\mathcal G$
and that $\wh{\G}$ does not contain a non-trivial finite normal subgroup.
If $H<\G$ is a proper subgroup of finite index, then either $H\cong \G$ or else $H$ has a finite quotient that $\G$ does not have.
\end{corollary}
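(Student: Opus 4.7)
The plan is to argue by contrapositive. I would assume that $H$ is a proper subgroup of finite index in $\G$ and that every finite quotient of $H$ is also a quotient of $\G$, and then deduce $H\cong\G$.

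The first step is to feed this hypothesis into Propositions \ref{p1} and \ref{p2} in tandem (i.e.\ Proposition \ref{p-intro}). Since $H$, as a finite-index subgroup of the finitely generated residually finite group $\G$, is itself finitely generated and residually finite, every finite quotient of $H$ is by assumption a finite quotient of $\G$; Proposition \ref{p1} thus produces a continuous epimorphism $\alpha:\wh{\G}\twoheadrightarrow\wh{H}$. By Proposition \ref{correspondence}, the closure $\overline{H}$ of $H$ in $\wh{\G}$ is identified with $\wh{H}$ and has index $[\G:H]<\infty$ in $\wh{\G}$, so $\alpha$ can be viewed as an epimorphism from $\wh{\G}$ onto one of its own subgroups of finite index. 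Proposition \ref{p2} therefore tells us that $\ker\alpha$ is finite.

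Now I bring in the hypothesis on normal subgroups: $\ker\alpha$ is a finite normal subgroup of $\wh{\G}$, so it must be trivial. Hence $\alpha$ is a continuous isomorphism $\wh{\G}\cong\wh{H}$. Because $\mathcal G$ is closed under passage to subgroups of finite index we have $H\in\mathcal G$, and the profinite rigidity of $\G$ within $\mathcal G$ delivers $H\cong\G$, completing the contrapositive.

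There is no substantive obstacle beyond correctly assembling the pieces. The one subtlety worth flagging is the identification $\wh{H}\cong\overline{H}$ used to interpret $\alpha$ as a self-map onto a finite-index subgroup of $\wh{\G}$; this uses crucially that $H$ has finite index in $\G$ (it rests on the matching clause in Proposition \ref{correspondence}), and nothing comparable would be available for subgroups of infinite index. This is why the statement is restricted to proper subgroups of finite index.
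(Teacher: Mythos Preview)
Your argument is correct and is precisely the intended one: the paper does not spell out a proof of Corollary \ref{c2}, declaring it ``an immediate consequence of Propositions \ref{p1} and \ref{p2}'', and what you have written is exactly the unpacking of that sentence together with the hypothesis on finite normal subgroups and profinite rigidity in $\mathcal{G}$. The subtlety you flag about identifying $\wh{H}$ with $\overline{H}$ is indeed the only thing requiring care, and it is handled by the finite-index hypothesis just as you say.
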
 

The results of \cite{BCR} imply that Corollary \ref{c2} applies to any non-elementary Fuchsian group $\G$
(cf.~\cite[Corollary 3.7]{BMRS2}). It also applies when $\mathcal{G}$ is the class of 3-manifold groups and $\G$ 
is the fundamental group of any once-punctured torus bundle over the circle \cite{BRW}.

\subsection{Fuchsian groups and limit groups: $b_1^{(2)}>0$ and goodness}

In order to apply Corollary \ref{c2} to non-elementary Fuchsian groups we had to quote one of the main results
of \cite{BCR} --  non-isomorphic Fuchsian groups can be distinguished from each other by their finite quotients. A more useful
observation is that the proofs of profinite rigidity in \cite{BCR} and \cite{BMRS2} can be shortened considerably
by invoking the following consequence of Propositions \ref{p1} and \ref{p2}. 

\begin{proposition}\label{p4}
If $H$ is a proper subgroup of finite index in a finitely generated non-elementary Fuchsian group $\G$, then $H$ has a finite quotient that $\G$ does not have.
\end{proposition}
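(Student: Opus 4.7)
The plan is to apply Corollary \ref{c2} with $\mathcal{G}$ taken to be the class of finitely generated Fuchsian groups. This class is clearly closed under passage to finite-index subgroups, and the main theorem of \cite{BCR} is precisely that every non-elementary Fuchsian group in $\mathcal{G}$ is profinitely rigid within $\mathcal{G}$. The first two hypotheses of the corollary are therefore in hand.

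The remaining hypothesis is that $\wh{\G}$ contains no non-trivial finite normal subgroup, and I would verify this in exact parallel with the proof of Corollary \ref{c:SL-no-N}, one dimension lower. Fuchsian groups are good \cite{GJZ} and have a torsion-free subgroup of finite index of cohomological dimension at most $2$, so Proposition \ref{p:no-normal} applies: every element of prime order in $\wh{\G}$ is conjugate into $\G$. If $N\ns\wh{\G}$ were a non-trivial finite normal subgroup, then $N$ would contain an element of prime order; this element is conjugate in $\wh{\G}$ to an element of $\G$, and by normality the conjugate remains in $N$, so $N\cap\G$ is a non-trivial finite subgroup of $\G$ that is normal in $\G$ (because $\G$ normalises both $N$ and itself). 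But a non-trivial finite subgroup of a Fuchsian group fixes a unique point of $\H^2$; if it is normal in $\G$, that point is $\G$-invariant, forcing $\G$ to be elementary, a contradiction.

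Corollary \ref{c2} now yields the dichotomy: either $H\cong\G$, or $H$ has a finite quotient that $\G$ does not. To rule out the first alternative I would invoke Lemma \ref{l2}(1) together with the standard fact that a non-elementary finitely generated Fuchsian group has strictly positive first $\ell_2$-Betti number (equal to $-\chi^{\rm orb}(\G)$). If $[\G:H]=d>1$, then $b_1^{(2)}(H)=d\,b_1^{(2)}(\G)>b_1^{(2)}(\G)$, so $H\not\cong \G$; alternatively, one can compare rational orbifold Euler characteristics directly.

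The only step of genuine substance is the absence of finite normal subgroups in $\wh{\G}$, and this is the obstacle I would flag as most delicate. Once that is established, everything else is routine bookkeeping, built on the general framework of Section \ref{s:basic} together with \cite{BCR} and \cite{GJZ}. This is precisely the source of the promised simplification of the corresponding arguments in \cite{BCR} and \cite{BMRS2}.
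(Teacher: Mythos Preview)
Your argument is logically correct, but it takes a different route from the paper's and in doing so misses the point of the proposition. You invoke Corollary~\ref{c2}, whose hypothesis is that $\G$ is profinitely rigid within the class of Fuchsian groups; to verify that, you quote the main theorem of \cite{BCR}. The paper, by contrast, bypasses Corollary~\ref{c2} entirely: it argues directly from Propositions~\ref{p1} and~\ref{p2} that if every finite quotient of $H$ were a quotient of $\G$ then $\wh{\G}\cong\wh{H}$, and then observes via Lemma~\ref{l2} that $b_1^{(2)}(H)=[\G:H]\,b_1^{(2)}(\G)>b_1^{(2)}(\G)$ rules this out. (For the absence of finite normal subgroups in $\wh{\G}$ the paper simply cites \cite[Corollary~5.2]{BCR}; your rederivation via Proposition~\ref{p:no-normal} is also fine.)

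The distinction matters because the paper explicitly introduces Proposition~\ref{p4} as a tool that \emph{shortens} the profinite-rigidity proofs in \cite{BCR} and \cite{BMRS2}. Your proof consumes the main result of \cite{BCR} as an input, so your closing sentence---that this ``is precisely the source of the promised simplification of the corresponding arguments in \cite{BCR}''---is backwards: an argument that relies on \cite{BCR} cannot simplify \cite{BCR}. The paper's direct route needs only $b_1^{(2)}>0$ and the absence of finite normal subgroups, not rigidity within the class, and this is what makes it usable upstream.
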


\begin{proof} If the proposition were to fail for a finite-index subgroup $H<\G$, then Proposition \ref{p1} would provide 
a continuous epimorphism $\wh{\G}\onto\wh{H}$. As $\G$ is a non-elementary Fuchsian group, $\wh{\G}$ does
not contain a non-trivial finite normal subgroup \cite[Corollary 5.2]{BCR}, so  Proposition \ref{p2} implies that $\wh{\G}\onto\wh{H}$
is an isomorphism. On the other hand, since $b_1^{(2)}(\G)>0$ for non-elementary Fuchsian groups (see for example \cite[Proposition 3.5]{BCR})
and since Fuchsian groups are finitely presented, Lemma \ref{l2} 
tells us that $\wh{\G}\not\cong\wh{H}$ if $H<\G$ is a proper subgroup of finite index -- a contradiction.
\end{proof}

The proof of Proposition \ref{p4} serves as a template for other classes of groups in which (1) one can control finite normal subgroups
in $\wh{\G}$ and (2) one has a profinite invariant (for example $b_1^{(2)}(\G)$) that is non-zero and
increases when one passes to finite-index subgroups (cf.~\cite{BGHM}). 
Goodness provides a means of controlling finite subgroups (Lemma \ref{l:good-tf}
and Proposition \ref{p:no-normal} ), so it is natural
to look for further applications in settings where one knows the groups are good.  
Recall that a finitely generated group $\G$ is called a {\em limit group} if it is fully residually free, i.e.~for every finite subset $X \subset \G$,  there is a homomorphism from $\G$ to a free group that restricts to an injection on $X$.

\begin{proposition}\label{p:limit} Let $\G$ be a non-abelian limit group.
If $H$ is a proper subgroup of finite index in $\G$, then $H$ has a finite quotient that $\G$ does not have.
\end{proposition}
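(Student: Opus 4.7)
The plan is to adapt the template established in the proof of Proposition \ref{p4}. I would suppose for contradiction that every finite quotient of $H$ is also a quotient of $\G$; Proposition \ref{p1} then produces a continuous surjection $\wh{\G}\onto\wh{H}$, and the goal becomes to promote this surjection to an isomorphism by means of Proposition \ref{p2} and then to contradict an $\ell^2$-Betti number inequality supplied by Lemma \ref{l2}.

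To apply Proposition \ref{p2}, I need $\wh{\G}$ to contain no non-trivial finite normal subgroup, and the cleanest route is via Lemma \ref{l:good-tf}. A non-abelian limit group $\G$ is torsion-free (it embeds in an iterated extension of centralisers of a free group, following Kharlampovich--Myasnikov and Sela), it is good (by the theorem of Grunewald, Jaikin-Zapirain and Zalesskii), and it has cohomological dimension at most $2$ (Sela). Lemma \ref{l:good-tf} therefore gives that $\wh{\G}$ is torsion-free, hence has no non-trivial finite normal subgroup, and Proposition \ref{p2} then forces the continuous surjection $\wh{\G}\onto\wh{H}$ to be an isomorphism.

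To close the argument I would invoke the fact that $b_1^{(2)}(\G)>0$ for every non-abelian limit group $\G$. Granted this, Lemma \ref{l2}(1) gives
\[
b_1^{(2)}(H) = [\G:H]\cdot b_1^{(2)}(\G) > b_1^{(2)}(\G),
\]
since $[\G:H]>1$, which contradicts Lemma \ref{l2}(2) applied to the (supposed) isomorphism $\wh{\G}\cong\wh{H}$.

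The main obstacle I anticipate is securing the input that $b_1^{(2)}(\G)>0$ for every non-abelian limit group. This should be available in the literature, for example via the hierarchical description of limit groups (iterated amalgams and HNN extensions along cyclic subgroups, beginning from surface and free groups) combined with the additivity of first $\ell^2$-Betti numbers under graphs of groups with amenable edge stabilisers; it can also be extracted from results of Bridson--Kochloukova on the deficiency and Euler characteristic of limit groups. Once this ingredient is cited, the remainder of the proof is a direct analogue of the Fuchsian case handled by Proposition \ref{p4} and requires no further calculation.
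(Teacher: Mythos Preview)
Your proof is correct and follows essentially the same approach as the paper's own proof: both apply Propositions \ref{p1} and \ref{p2} together with Lemma \ref{l:good-tf} (using goodness from \cite{GJZ} and finite cohomological dimension) to obtain $\wh{\G}\cong\wh{H}$, and then derive a contradiction from $b_1^{(2)}(\G)>0$ via Lemma \ref{l2}. The paper cites \cite{BK} for the positivity of $b_1^{(2)}$, exactly the Bridson--Kochloukova reference you anticipated.
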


\begin{proof} We follow the proof of Proposition \ref{p4}. 
Limit groups have finite classifying spaces and are good, by  \cite{GJZ}, so $\wh{\G}$ is torsion-free by Lemma \ref{l:good-tf}.
Thus if the proposition were to fail for $H<\G$, then by combining Propositions \ref{p1} and \ref{p2} we would get
an isomorphism $\wh{\G}\cong\wh{H}$. 
But  $b_1^{(2)}(\G)>0$ for non-abelian limit groups, by \cite{BK}. Lemma \ref{l2}
provides us with the desired contradiction, as before. 
\end{proof}
 
A further application of the same argument yields a special case of Theorem \ref{t2}, for which we need the following lemma.

\begin{lemma}
\label{l:l2}
Let $\G$ be a finitely generated non-elementary Kleinian group.  Then $b_1^{(2)}(\G)=0$ if and only if $\G$ has finite co-volume. 
\end{lemma}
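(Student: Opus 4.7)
The plan is to reduce to the torsion-free case and then argue each direction separately. By Selberg's Lemma there is a torsion-free subgroup $\G_0<\G$ of finite index $d$, and Lemma \ref{l2}(1) gives $b_1^{(2)}(\G)=b_1^{(2)}(\G_0)/d$. Since $\G$ has finite co-volume iff $\G_0$ does, I may replace $\G$ by $\G_0$ and assume $\G$ is torsion-free, writing $M=\H^3/\G$.

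For the forward implication, if $\G$ has finite co-volume then $M$ is a finite-volume hyperbolic $3$-manifold, and I would invoke the classical vanishing theorem (due to Dodziuk in the closed case, and extended to the cusped case by Lott) which says that all $\ell^2$-Betti numbers of a finite-volume odd-dimensional hyperbolic manifold are zero; in particular $b_1^{(2)}(\G)=0$.

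For the converse, assume $\G$ is non-elementary with infinite co-volume. By the Tameness Theorem of Agol and of Calegari--Gabai, $M$ is homeomorphic to the interior of a compact, aspherical $3$-manifold $N$ with $\pi_1(N)=\G$, and $N$ is irreducible because $\widetilde N$ embeds in $\H^3$. Infinite co-volume forces at least one end of $M$ to be non-cuspidal, and the end-classification for tame hyperbolic $3$-manifolds (geometrically finite ends contribute conformal-boundary components of genus $\geq 2$, and simply degenerate ends are modelled on $\Sigma_g\times[0,\infty)$ with $g\geq 2$) produces a component of $\partial N$ of genus at least $2$. Hence $\chi(\partial N)<0$, and Poincar\'e--Lefschetz duality gives $\chi(N)=\tfrac12\chi(\partial N)<0$.

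To conclude, $N$ is homotopy equivalent to a $2$-complex because it is a compact $3$-manifold with non-empty boundary, so $\cd(\G)\leq 2$ and therefore $b_k^{(2)}(\G)=0$ for $k\geq 3$; also $b_0^{(2)}(\G)=0$ because $\G$ is infinite. Atiyah's $\ell^2$-Euler characteristic formula then reads $\chi(N)=b_2^{(2)}(\G)-b_1^{(2)}(\G)$, and since $\chi(N)<0$ and $b_2^{(2)}(\G)\geq 0$, we force $b_1^{(2)}(\G)>0$. The main obstacle is the geometric input needed for the converse: tameness (to extract a compact core) and the end classification (to rule out an all-torus boundary). With these in hand, the Euler-characteristic shortcut avoids invoking the full Lott--L\"uck formula for $\ell^2$-Betti numbers of $3$-manifold groups.
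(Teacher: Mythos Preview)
Your argument is correct, and the reduction to the torsion-free case via Selberg and Lemma~\ref{l2}(1) is exactly what the paper does. After that point, however, the paper takes a much shorter route: it simply cites Lott--L\"uck \cite{LL} for the entire torsion-free statement, whereas you unpack the two directions by hand.

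Your explicit approach works. For the finite-volume direction, the Dodziuk/Lott vanishing theorem is indeed the content behind the citation. For the infinite-volume direction, your Euler-characteristic argument via Tameness is a genuinely different path. The one place where your write-up is slightly loose is the sentence about end-classification: in the presence of rank-1 cusps the simply degenerate or flaring ends are modelled on punctured surfaces rather than closed $\Sigma_g$, so the phrase ``genus $\geq 2$'' is not literally accurate for the \emph{end model}. What you actually need, and what does hold, is that any boundary component of the compact core $N$ that is not an incompressible torus has negative Euler characteristic. This follows because sphere components are excluded by asphericity, and a compressible torus boundary component in an irreducible $3$-manifold forces $N$ to be a solid torus, contradicting non-elementarity; the remaining incompressible tori carry $\Z^2$ subgroups, which must be parabolic and hence give finite-volume cusp ends. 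So infinite covolume forces a component of $\partial N$ with $\chi<0$, and your Euler-characteristic computation then finishes the job.

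In short: the paper defers to \cite{LL}, while you reprove the relevant special case using Tameness (which postdates \cite{LL}) plus an elementary $\chi$-count. Your route trades one black box for another but has the merit of making the geometric reason for $b_1^{(2)}>0$ transparent.
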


\begin{proof} When $\G$ is torsion-free, this  is proved in \cite{LL}. The general case follows from
Lemma \ref{l2}, since finitely generated Kleinian groups are finitely presented.
\end{proof}

\begin{proposition}\label{p:inf-vol} Let $\G$ be a finitely generated non-elementary Kleinian group that  
has infinite co-volume. If $H<\G$ is a proper subgroup of finite index, then $H$ has a finite quotient that $\G$ does not have.
\end{proposition}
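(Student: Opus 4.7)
The plan is to follow the template of Propositions \ref{p4} and \ref{p:limit} verbatim, adapting the ingredients to the Kleinian setting. I would suppose, for contradiction, that every finite quotient of $H$ is also a quotient of $\G$. Then Proposition \ref{p1} yields a continuous surjection $\pi\colon\wh{\G}\onto\wh{H}$, and via Proposition \ref{correspondence} one identifies $\wh{H}$ with its closure $\overline{H}$, a subgroup of finite index in $\wh{\G}$. Since $\G$ is a finitely generated non-elementary Kleinian group, Corollary \ref{c:SL-no-N} tells us that $\wh{\G}$ has no non-trivial finite normal subgroup, so Proposition \ref{p2} upgrades $\pi$ to an isomorphism $\wh{\G}\cong\wh{H}$.

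To close the argument, I would invoke first $\ell^2$-Betti numbers. Finitely generated Kleinian groups are finitely presented, so Lemma \ref{l2} applies: part (2) forces $b_1^{(2)}(\G)=b_1^{(2)}(H)$, while part (1) gives $b_1^{(2)}(H)=[\G:H]\,b_1^{(2)}(\G)$. Because $\G$ is non-elementary and has infinite co-volume, Lemma \ref{l:l2} yields $b_1^{(2)}(\G)>0$, and combined with $[\G:H]\ge 2$ this is inconsistent with the previous equality.

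I do not anticipate any real obstacle: every needed ingredient has been assembled in Section \ref{s2}. The one point worth emphasising is that Corollary \ref{c:SL-no-N} is stated for finitely generated non-elementary Kleinian groups without any hypothesis on co-volume, which is exactly what is needed here. This also explains why the same strategy breaks down for the finite co-volume case of Theorem \ref{t2}: there Lemma \ref{l:l2} gives $b_1^{(2)}(\G)=0$, and the $\ell^2$-invariant loses all discriminating power, so one must bring in substantially deeper tools (notably Liu's theorem) to handle that case in Section 5.
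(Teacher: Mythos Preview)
Your proposal is correct and follows exactly the template the paper intends: the paper's own proof is a two-line pointer to Corollary \ref{c:SL-no-N} and Lemma \ref{l:l2}, relying on the reader to slot these into the argument of Propositions \ref{p4} and \ref{p:limit}, which is precisely what you have spelled out. Your closing remark about why the finite co-volume case requires different tools is also on target.
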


\begin{proof} Corollary \ref{c:SL-no-N} assures us $\G$ has no non-trivial finite normal subgroup,
and Lemma \ref{l:l2} tells us $b_1^{(2)}(\G)>0$.
\end{proof}

\section{Profinite rigidity of over-lattices} \label{s:go-up}

Before turning to the proof of Theorems \ref{t1} and \ref{t2}, we elaborate further on the significance of finite normal
subgroups in profinite completions, which emerged in the previous section. By focusing on this, we shall see in particular that the
normaliser of any profinitely rigid lattice  $\G<{\rm{(P)SL}}(2,\C)$ is itself profinitely rigid. 
Mostow-Prasad rigidity plays an important role in these arguments.

\begin{proposition}\label{p:latt-test}
Let $\G$ be a lattice in $\PSL(2,\C)$ that is profinitely rigid (in the absolute sense). 
Let $\Lambda$ be a finitely generated, residually finite group that has a subgroup of  finite index $\Delta$ 
with $\wh{\Delta}\cong \wh{\G}$.
Then $\Lambda$ is isomorphic to a lattice in $\PSL(2,\C)$ if and only if $\widehat{\Lambda}$ does not contain a 
non-trivial finite normal subgroup.
\end{proposition}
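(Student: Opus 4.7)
The ``only if'' direction is immediate from Corollary \ref{c:SL-no-N}: a lattice in $\PSL(2,\C)$ has finite covolume, is therefore non-elementary, and so its profinite completion contains no non-trivial finite normal subgroup.

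For the converse, suppose $\widehat{\Lambda}$ has no non-trivial finite normal subgroup. Since $\G$ is profinitely rigid in the absolute sense and $\widehat{\Delta}\cong\widehat{\G}$, I would first conclude $\Delta\cong\G$; in particular $\Delta$ is (isomorphic to) a lattice in $\PSL(2,\C)$. Replacing $\Delta$ by its normal core in $\Lambda$ --- still of finite index in $\Lambda$, and still a lattice in $\PSL(2,\C)$ as a finite-index subgroup of $\G$ --- I may assume $\Delta\triangleleft\Lambda$. Fix a lattice embedding $\iota:\Delta\hookrightarrow\PSL(2,\C)$.

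The main step is to use Mostow--Prasad rigidity to extend $\iota$ to a homomorphism on $\Lambda$. For each $\lambda\in\Lambda$, conjugation by $\lambda$ induces $c_\lambda\in\Aut(\Delta)$, and $\iota\circ c_\lambda:\Delta\to\PSL(2,\C)$ is another faithful discrete finite-covolume representation; Mostow--Prasad then supplies a unique $\phi(\lambda)\in\Isom(\H^3)$ conjugating $\iota$ to $\iota\circ c_\lambda$, the uniqueness reflecting that $\iota(\Delta)$, being non-elementary, has trivial centraliser in $\Isom(\H^3)$. The resulting homomorphism $\phi:\Lambda\to\Isom(\H^3)$ restricts to $\iota$ on $\Delta$; its kernel is $C_\Lambda(\Delta)$, which meets $\Delta$ only in $Z(\Delta)=1$, so it embeds in the finite group $\Lambda/\Delta$ and is itself finite. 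Being normal in $\Lambda$, its closure is a finite normal subgroup of $\widehat{\Lambda}$ --- trivial by hypothesis. Thus $\phi$ is injective, and $\phi(\Lambda)$ is a lattice in $\Isom(\H^3)$ containing $\iota(\Delta)$ with finite index.

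The remaining point --- and in my view the principal technical obstacle --- is to verify that $\phi(\Lambda)$ in fact lies in $\PSL(2,\C)$, i.e.~contains no orientation-reversing isometry. The hypothesis on $\widehat{\Lambda}$ must again intervene here: an orientation-reversing element of $\phi(\Lambda)$ yields, via the Mostow--Prasad identification $\Out(\Delta)\cong\Isom(\H^3/\iota(\Delta))$, an index-two subgroup $\Lambda^+\vartriangleleft\Lambda$ and a corresponding split extension $\widehat{\Lambda^+}\vartriangleleft\widehat{\Lambda}$. A careful analysis of this extension --- exploiting the torsion-freeness of finite-index subgroups of $\widehat{\Delta}$ provided by goodness (Corollary \ref{c:good-SL}, where applicable), together with the structure of the Galois conjugates of $\iota$ --- should either produce a central order-two element of $\widehat{\Lambda}$, contradicting the hypothesis, or realise $\Lambda$ as a lattice in $\PSL(2,\C)$ by conjugating through a suitable Galois conjugate embedding of $\Delta$. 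Once $\phi(\Lambda)\subset\PSL(2,\C)$ is secured, $\Lambda\cong\phi(\Lambda)$ is itself a lattice in $\PSL(2,\C)$, completing the argument.
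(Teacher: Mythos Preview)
Your argument through the construction of $\phi:\Lambda\to\Isom(\H^3)$ with finite kernel follows the paper's line exactly; you have simply unpacked the Mostow--Prasad step that the paper compresses into the single assertion that there is a short exact sequence $1\to N\to\Lambda\to\Omega\to 1$ with $N$ finite and $\Omega$ a lattice containing $\G$. The paper then declares $\Omega$ to lie in $\PSL(2,\C)$ without further comment and concludes.

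You are right to single out the passage from $\Isom(\H^3)$ to $\PSL(2,\C)$ as a genuine issue, but your final paragraph is not a proof: you propose a dichotomy and say a ``careful analysis \ldots should'' establish one branch, without carrying out that analysis. In fact neither branch can work in general. If $\G$ is profinitely rigid and $\H^3/\G$ admits an orientation-reversing isometry $\sigma$, set $\Lambda=\langle\G,\sigma\rangle$. Then $\Lambda$ is a non-elementary lattice in $\Isom(\H^3)$, hence has no non-trivial finite normal subgroup, and it is good (goodness being a commensurability invariant), so Proposition~\ref{p:no-normal} shows that $\widehat\Lambda$ has none either. Thus the hypothesis of the proposition is satisfied, yet there is no central involution for you to locate. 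Nor can a Galois-conjugate re-embedding save the day: Mostow--Prasad forces any abstract isomorphism from $\Lambda$ to a lattice in $\PSL(2,\C)$ to be realised by conjugation in $\Isom(\H^3)$, and conjugation preserves the presence of orientation-reversing elements. What your argument (and the paper's) actually establishes is that $\Lambda$ is isomorphic to a lattice in $\Isom(\H^3)$; closing the gap to $\PSL(2,\C)$ would require an additional hypothesis or a genuinely different idea, not the mechanism you sketch.
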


\begin{proof}
If $\Lambda$ is Kleinian, then Corollary \ref{c:SL-no-N}  assures us there is no non-trivial finite normal subgroup in $\widehat{\Lambda}$.
The profinite rigidity of $\G$ implies $\Delta\cong \G$. Thus $\Lambda$ is commensurable to a  lattice in $\PSL(2,\C)$, 
and by Mostow-Prasad Rigidity there is a short exact sequence
$$
1\to N\to \Lambda \to \Omega \to 1
$$
with  $N$ is finite and $\Omega$ a lattice in $\PSL(2,\C)$ that contains $\G$. 
If $\Lambda$ is not itself a lattice, then $N\neq 1$ is the  non-trivial finite normal subgroup in $\widehat{\Lambda}$ 
we seek.  
\end{proof}

\begin{corollary} \label{cor-general}
Let $\Lambda_1$ and $\Lambda_2$ 
be finitely generated, residually finite groups with 
$\widehat{\Lambda}_1\cong\widehat{\Lambda}_2$.  Let $\G< \PSL(2,\C)$ be a lattice that is profinitely rigid. 
If $\Lambda_1$ has  a finite-index subgroup  isomorphic to $\G$, then so does $\Lambda_2$. In this case, either
$\Lambda_1$ and $\Lambda_2$ are (isomorphic to)
 commensurable lattices in $\PSL(2,\C)$ with the same covolume,  or else neither is a lattice.
 \end{corollary}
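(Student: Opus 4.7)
The plan is to first transfer the finite-index copy of $\G$ from $\Lambda_1$ to $\Lambda_2$, then apply Proposition \ref{p:latt-test} to the two groups in parallel, and finally use Mostow--Prasad rigidity to reconcile the lattice structures.

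First I would fix a finite-index subgroup $\Delta_1 < \Lambda_1$ with $\Delta_1 \cong \G$, and let $\theta: \widehat{\Lambda}_1 \overset{\cong}{\to} \widehat{\Lambda}_2$ be the given isomorphism. By Proposition \ref{correspondence}, the closure $\overline{\Delta}_1 < \widehat{\Lambda}_1$ is the profinite completion of $\Delta_1$, and has index $[\Lambda_1:\Delta_1]$. Its image $\theta(\overline{\Delta}_1)$ is an open subgroup of $\widehat{\Lambda}_2$ of the same index, so by Proposition \ref{correspondence} again, $\Delta_2 := \theta(\overline{\Delta}_1) \cap \Lambda_2$ is a finite-index subgroup of $\Lambda_2$ whose profinite completion is $\theta(\overline{\Delta}_1) \cong \widehat{\Delta}_1 \cong \widehat{\G}$. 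Since $\G$ is profinitely rigid, $\Delta_2 \cong \G$, so $\Lambda_2$ also contains a finite-index copy of $\G$; moreover $[\Lambda_2:\Delta_2]=[\Lambda_1:\Delta_1]$.

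Next I would use Proposition \ref{p:latt-test}: it tells us that $\Lambda_i$ is isomorphic to a lattice in $\PSL(2,\C)$ if and only if $\widehat{\Lambda}_i$ has no non-trivial finite normal subgroup. The existence of a non-trivial finite normal subgroup in a profinite group is preserved by $\theta$, so this dichotomy passes simultaneously between $\Lambda_1$ and $\Lambda_2$. Hence either both are lattices or neither is, giving the second clause of the conclusion.

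In the case where both are lattices, I would invoke Mostow--Prasad rigidity: each $\Lambda_i$ contains $\Delta_i \cong \G$ as a subgroup of finite index, and any abstract isomorphism of lattices in $\PSL(2,\C)$ comes from conjugation, so up to conjugacy each $\Lambda_i$ is a lattice in $\PSL(2,\C)$ containing a copy of $\G$ with finite index. Both are therefore commensurable with $\G$ (and hence with each other). The covolume relation $\vol(\Lambda_i) = \vol(\G)/[\Lambda_i:\Delta_i]$ combined with the equality $[\Lambda_1:\Delta_1]=[\Lambda_2:\Delta_2]$ established above yields $\vol(\Lambda_1)=\vol(\Lambda_2)$. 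I do not anticipate a hard step here: the main thing to be careful about is organising the bookkeeping so that the index transported through $\theta$ really is the one recorded by the covolume formula, which is exactly what Proposition \ref{correspondence} guarantees.
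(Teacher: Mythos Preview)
Your proof is correct and follows essentially the same route as the paper: transfer the finite-index copy of $\G$ via Proposition~\ref{correspondence} and profinite rigidity, then apply Proposition~\ref{p:latt-test} to both $\Lambda_i$ simultaneously. You are slightly more explicit than the paper in naming Mostow--Prasad rigidity for the commensurability and covolume step, but the argument is the same.
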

 
 \begin{proof}
 Suppose $\Lambda_1$ contains $\G$ as a subgroup of index $d$. Proposition \ref{correspondence} implies
that $\Lambda_2$ contains a subgroup $\Delta$ of index $d$
with $\widehat{\Delta}\cong\widehat{\G}$. As $\G$ is profinitely rigid, $\Delta\cong\G$. So if $\Lambda_1$
and $\Lambda_2$ are lattices, then they are commensurable and the covolume of each is $1/d$ that of $\G$.
And Proposition \ref{p:latt-test} tells us they are lattices if and only if $\widehat{\Lambda}_1\cong\widehat{\Lambda}_2$ does not
contain a non-trivial finite normal subgroup.
 \end{proof}

Recall that the  {\em{profinite genus}} of a  finitely generated group $G$ consists of the set of isomorphism classes of finitely
generated, residually finite groups $H$ such that $\widehat{H}\cong\widehat{G}$.

In the following statement, we use Mostow-Prasad rigidity to conjugate the representatives
of the isomorphism classes in the genus of $\Delta$ so that they contain $\G$ itself rather an isomorphic copy.
The statement about normality is immediate from Proposition \ref{correspondence}. 

\begin{corollary}\label{c:only-latt}
Let $\G<\Delta< \PSL(2,\C)$ be lattices. If $\G$  is profinitely rigid, then the profinite genus of $\Delta$
is a subset of the lattices in $\PSL(2,\C)$ that contain $\G$ with index $[\Delta:\G]$. If $\G$ is normal in
$\Delta$, then it is normal in all of the lattices in the profinite genus of $\Delta$.
\end{corollary}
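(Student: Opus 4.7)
The plan is to combine Corollary \ref{cor-general} with Mostow--Prasad rigidity, then read off the normality statement directly from Proposition \ref{correspondence}. Given any finitely generated, residually finite group $\Lambda$ with $\widehat{\Lambda}\cong\widehat{\Delta}$, I would fix an isomorphism $\phi\colon\widehat{\Lambda}\to\widehat{\Delta}$ and use Proposition \ref{correspondence} to produce the finite-index subgroup $\G_{\Lambda}:=\phi^{-1}(\overline{\G})\cap\Lambda$ of $\Lambda$; by that proposition $[\Lambda:\G_{\Lambda}]=[\Delta:\G]$ and $\widehat{\G_{\Lambda}}\cong\overline{\G}\cong\widehat{\G}$, so the profinite rigidity of $\G$ gives $\G_{\Lambda}\cong\G$.

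Next, Corollary \ref{cor-general} applies with $\Lambda_1=\Delta$ and $\Lambda_2=\Lambda$: since $\Delta$ contains (a copy of) $\G$ with finite index and is itself a lattice in $\PSL(2,\C)$, the corollary forces $\Lambda$ to be (isomorphic to) a lattice in $\PSL(2,\C)$ with the same covolume as $\Delta$. Fixing an embedding $\iota\colon\Lambda\hookrightarrow\PSL(2,\C)$ as a lattice, the subgroup $\iota(\G_{\Lambda})$ is a lattice in $\PSL(2,\C)$ abstractly isomorphic to $\G$, so Mostow--Prasad rigidity supplies $g\in\PSL(2,\C)$ conjugating $\iota(\G_{\Lambda})$ onto $\G$. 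After replacing $\iota$ by its conjugate by $g$, we may assume $\iota(\G_{\Lambda})=\G$, exhibiting $\iota(\Lambda)$ as a lattice in $\PSL(2,\C)$ that contains $\G$ with index $[\Delta:\G]$, as required by the first clause.

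For the normality clause, suppose $\G\triangleleft\Delta$. By the last assertion of Proposition \ref{correspondence}, $\overline{\G}\triangleleft\widehat{\Delta}$; transporting through $\phi$ yields $\overline{\G_{\Lambda}}\triangleleft\widehat{\Lambda}$, and a second appeal to Proposition \ref{correspondence} gives $\G_{\Lambda}\triangleleft\Lambda$, which under the identification $\iota(\G_{\Lambda})=\G$ is exactly the normality of $\G$ in $\iota(\Lambda)$.

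The principal difficulty is conceptual rather than technical: one must pass from the abstract profinite datum $\widehat{\Lambda}\cong\widehat{\Delta}$ to a \emph{concrete} containment $\G\subset\iota(\Lambda)\subset\PSL(2,\C)$, and in particular to ensure that the distinguished copy of $\G$ inside $\iota(\Lambda)$ is literally the original $\G$ and not merely an isomorphic copy. Mostow--Prasad rigidity is precisely the tool that makes this passage work, and this is why the statement is formulated for lattices rather than for the more general class of profinitely rigid subgroup pairs to which Corollary \ref{cor-general} applies.
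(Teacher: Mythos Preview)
Your argument is correct and follows precisely the route the paper takes: the paper's proof is the single sentence preceding the corollary, which invokes Corollary \ref{cor-general}, uses Mostow--Prasad rigidity to conjugate the isomorphic copy of $\G$ onto $\G$ itself, and appeals to Proposition \ref{correspondence} for the normality clause. You have simply written out in full what the paper compresses into that sentence.
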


It follows from  Corollary \ref{c:only-latt} that if $\Delta$ is the only lattice of a given covolume containing $\G$,  
up to conjugacy, then $\Delta$ is profinitely rigid. 
Further rigid lattices can be obtained by using Corollary \ref{c:only-latt}  in combination with other profinite
invariants: for example, by goodness, one can distinguish lattices that contain torsion from those that are torsion-free.
The fact that normality is preserved in the standard correspondence of Proposition \ref{correspondence} can also be used in this context.

\begin{theorem}\label{normaliser} 
If a lattice in $\PSL(2,\C)$ is profinitely rigid (in the absolute sense), then so is its normaliser in $\PSL(2,\C)$.
\end{theorem}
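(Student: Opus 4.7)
The plan is to reduce the statement directly to Corollary \ref{c:only-latt}. First I would record the basic geometric setup: writing $N := N_{\PSL(2,\C)}(\G)$, the quotient $N/\G$ acts faithfully by orientation-preserving isometries on the finite-volume hyperbolic orbifold $\H^3/\G$. Since the isometry group of such an orbifold is finite, $[N:\G]=d<\infty$. In particular $N$ is a discrete subgroup of finite co-volume containing $\G$ as a normal subgroup of index $d$, i.e.\ $N$ is itself a (non-elementary) lattice in $\PSL(2,\C)$.

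Next, let $\Lambda$ be an arbitrary finitely generated, residually finite group with $\wh{\Lambda}\cong\wh{N}$; the goal is to show $\Lambda\cong N$. I would apply Corollary \ref{c:only-latt} to the chain $\G\triangleleft N$. Since $\G$ is assumed profinitely rigid in the absolute sense, the corollary states that every member of the profinite genus of $N$ is isomorphic to a lattice in $\PSL(2,\C)$ that contains $\G$ as a normal subgroup of index $d$. Applied to $\Lambda$, this produces a lattice $\Lambda^*\subset\PSL(2,\C)$ with $\Lambda\cong\Lambda^*$, $\G\triangleleft\Lambda^*$ and $[\Lambda^*:\G]=d$.

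Finally, because $\Lambda^*$ normalises $\G$ inside $\PSL(2,\C)$, the very definition of the normaliser forces $\Lambda^*\subseteq N$. The index comparison $[\Lambda^*:\G]=d=[N:\G]$ then gives $\Lambda^*=N$, so $\Lambda\cong N$, as required.

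I do not anticipate a genuine obstacle here: the substantive work has already been done in Corollary \ref{c:only-latt}, which itself rests on Proposition \ref{p:latt-test}, Corollary \ref{c:SL-no-N} (ruling out non-trivial finite normal subgroups in $\wh{N}$), the profinite rigidity of $\G$, and Mostow--Prasad rigidity (needed to promote the abstract isomorphism between $\G$ and its profinite twin inside $\Lambda$ to an honest conjugation in $\PSL(2,\C)$, so that $\Lambda^*$ literally contains $\G$). The only point requiring a brief explicit check is the initial claim that $N$ is a lattice containing $\G$ with finite index, which is the short geometric observation above.
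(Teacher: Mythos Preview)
Your proposal is correct and follows essentially the same route as the paper: both invoke Corollary \ref{c:only-latt} to realise $\Lambda$ as a lattice $\Lambda^*$ containing $\G$ normally with the correct index (equivalently, co-volume), and then conclude $\Lambda^*=N$ from the maximality of the normaliser. The paper phrases this last step via the characterisation of $N$ as the unique lattice of minimal co-volume containing $\G$ normally, while you use the inclusion $\Lambda^*\subseteq N$ together with the index equality; these are the same observation.
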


\begin{proof} By Mostow-Prasad rigidity, the normaliser of a lattice $\G< \PSL(2,\C)$ can be characterised as the
unique lattice $\Omega< \PSL(2,\C)$ of minimal co-volume containing $\G$ as a normal subgroup (necessarily of finite index). 

Let $\Lambda$ be a finitely generated, residually finite group  with $\widehat{\Lambda}\cong\widehat{\Omega}$. 
Then, as in Corollary \ref{c:only-latt}, $\Lambda$ is isomorphic to a lattice with the same covolume
as $\Omega$ that contains a normal subgroup $\G_0$ isomorphic to $\G$. By conjugating the image of
$\Lambda$ in $\PSL(2,\C)$ if necessary, we may assume $\G_0=\G$. Then, by the characterisation of $\Omega$  in the 
first sentence of the proof,  $\Lambda=\Omega$.
\end{proof}

\subsection{Arithmetic lattices of simplest type}

For the most part, our focus in this article is on hyperbolic orbifolds of dimension 3. This reflects the special role that
arithmetic plays in dimension 3, in particular the existence and importance of the invariant trace field,
and the fact that lattices in ${\rm{PSL}}(2,\C)$ are good. But there are also lattices in higher dimensional hyperbolic
lattices that fit well with the circle of ideas developed here,  
namely {\em standard} arithmetic lattices, which are also known as arithmetic lattices of {\em simplest type} -- their definition is recalled below.

Let $k$ be a totally real number field of degree $d$ over $\Q$ equipped with a fixed embedding into
$\R$ which we refer to as the identity embedding, and denote the ring of integers of $k$ by $R_k$.  Let $V$ be an $(n+1)$-dimensional vector space over $k$ equipped with a non-degenerate quadratic form $\mathrm{f}$ defined over $k$ which has signature $(n,1)$ at the identity embedding, and signature $(n+1,0)$ at the remaining $d-1$ embeddings.

Given this, the quadratic form $\mathrm{f}$ is equivalent over $\R$ to the quadratic form $J_n=x_0^2+x_1^2+\ldots +x_{n-1}^2-x_n^2$, and for any non-identity Galois embedding $\sigma:k\rightarrow \R$, the quadratic form $\mathrm{f}^\sigma$ (obtained by applying $\sigma$ to each entry of $\mathrm{f}$) is equivalent over $\R$  to $x_0^2+x_1^2+\ldots +x_{n-1}^2+x_n^2$. We call such a quadratic form {\em admissible}.

Let $F$ be the symmetric matrix associated to the quadratic form $\mathrm{f}$ and let $\Or(f)$ and $\SO(f)$ denote
the linear algebraic groups defined over $k$ described as:
$$\Or(f)=\{X\in\GL(n+1,\C):X^tFX=F\}~\hbox{and}~                                                                                     
\SO(f)=\{X\in\SL(n+1,\C):X^tFX=F\}.$$

\noindent For a subring $L\subset \C$, we denote the $L$-points of $\Or(f)$ (resp. $\SO(f)$) by $\Or(f,L)$ (resp. $\SO(f,L)$).

Note that, given an admissible quadratic form defined over $k$ of signature $(n,1)$, there exists $T\in \GL(n+1,\R)$ such that $T^{-1}\SO(f,\R)T = \SO(n,1)$.  Let ${\rm{Isom}}^+(\H^n)$ denote 
the full group of orientation-preserving isometries of $\H^n$. This can be identified with the group $\SO^+(J_n,\R) = \SO^+(n,1)$, which is the subgroup of $\SO(n,1)$ preserving the upper-half sheet of the hyperboloid $J_n=-1$.

A subgroup $\Gamma < {\rm{Isom}}^+(\H^n)$ is called {\em arithmetic of simplest type} (alternatively, {\em a standard arithmetic lattice})  
 if $\Gamma$ is commensurable with the image in ${\rm{Isom}}^+(\H^n)$ of the subgroup of $\SO(f,R_k)$ (under the conjugation map described above). An arithmetic hyperbolic $n$-manifold $M={\H}^n/\Gamma$ is called arithmetic of simplest type if $\Gamma$ is.

The following analogue of Corollary \ref{c:only-latt} holds for arithmetic lattices of simplest type. 

\begin{theorem}
\label{simplest_type}  Let $n\ge 2$, 
and let $\G<{\rm{Isom}}^+(\H^n)$ be an arithmetic lattice of simplest type which is profinitely rigid. Suppose that 
 $\Delta< {\rm{Isom}}^+(\H^n)$ contains $\G$ as a subgroup of finite index. 
Then the profinite genus of $\Delta$
is a subset of the lattices in ${\rm{Isom}}^+(\H^n)$ that contain $\G$ with index $[\Delta:\G]$. If $\G$ is normal in
$\Delta$, then it is normal in all of the lattices in the profinite genus of $\Delta$.
\end{theorem}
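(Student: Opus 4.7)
The plan is to adapt the proof of Corollary \ref{c:only-latt} to higher dimensions. The two critical ingredients are Mostow--Prasad rigidity (available for $n\ge 3$) and the analogue of Corollary \ref{c:SL-no-N}, namely that $\widehat{\Delta}$ contains no non-trivial finite normal subgroup.

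Let $\Lambda$ be a finitely generated, residually finite group with $\widehat\Lambda\cong\widehat\Delta$. By Proposition \ref{correspondence}, there is a subgroup $\Delta_0<\Lambda$ of index $[\Delta:\G]$ with $\widehat{\Delta_0}\cong\widehat\G$, and profinite rigidity of $\G$ yields $\Delta_0\cong\G$. Using Selberg's lemma and passing to the normal core in $\Lambda$, I would produce a torsion-free, finite-index, normal subgroup $\Lambda_0\vartriangleleft\Lambda$ isomorphic to some torsion-free finite-index sublattice $\G_0<\G$. Since $Z(\Lambda_0)=1$, the conjugation action yields an injection $\Lambda/C\hookrightarrow\Aut(\Lambda_0)$, with $C:=C_\Lambda(\Lambda_0)$ finite. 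Mostow--Prasad rigidity (for $n\ge 3$) identifies $\Aut(\Lambda_0)$ with the normaliser $N_{\Isom(\H^n)}(\Lambda_0)$, which is itself a lattice containing $\Lambda_0$ with finite index; hence $\Omega:=\Lambda/C$ is realised as a lattice in $\Isom(\H^n)$.

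The main obstacle is to show $C=1$. Because $C$ is a finite subgroup of $\Lambda$ normalised by the dense subgroup $\Lambda\subset\widehat\Lambda$, and the normaliser of a finite subgroup in a profinite group is closed, $C$ is normal in $\widehat\Lambda\cong\widehat\Delta$. So it suffices to show $\widehat\Delta$ admits no non-trivial finite normal subgroup. Arithmetic lattices of simplest type are virtually compact special (Bergeron--Haglund--Wise), hence good, and have a finite-index torsion-free subgroup of cohomological dimension $n$. Proposition \ref{p:no-normal} therefore applies, so every prime-order element of $\widehat\Delta$ is conjugate into $\Delta$. A non-trivial finite normal subgroup of $\widehat\Delta$ would then intersect $\Delta$ in a non-trivial finite normal subgroup $H$, whose fixed set $\mathrm{Fix}(H)\subset\H^n$ would be a proper, totally geodesic, $\Delta$-invariant submanifold---contradicting the Zariski density of $\Delta$ in $\SO^+(n,1)$.

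Granting $C=1$, we have $\Lambda\cong\Omega$, a lattice containing a subgroup of index $[\Delta:\G]$ isomorphic to $\G$. Mostow--Prasad again permits us to conjugate $\Omega$ inside $\Isom(\H^n)$ so that this subgroup coincides with $\G$ itself, placing $\Lambda$ among the lattices containing $\G$ with the prescribed index. The normality assertion is immediate from the preservation of normality under the discrete--profinite correspondence of Proposition \ref{correspondence}. When $n=2$, the Mostow--Prasad step needs to be supplemented by a Fuchsian-specific argument along the lines of \cite{BCR}, but the overall strategy is unchanged.
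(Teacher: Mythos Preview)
Your proposal is correct and follows essentially the same approach as the paper. The paper's proof simply says that the argument for Corollary \ref{c:only-latt} (which is exactly the Mostow--Prasad/centraliser scheme you spell out) will apply once one knows $\widehat{\Delta}$ has no non-trivial finite normal subgroup, and then establishes that fact via goodness of simplest-type lattices (Bergeron--Haglund--Wise) together with Proposition \ref{p:no-normal} and the same fixed-set argument you give; your write-up just unpacks the Corollary \ref{c:only-latt} step in more detail and explicitly flags the $n=2$ caveat, which the paper leaves implicit.
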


\begin{proof} The proof of Corollary \ref{c:only-latt} will apply once we have established that the lattices we are studying
do not contain non-trivial finite, normal subgroups. To see that this is the case, we appeal to Proposition \ref{p:no-normal}.
Subgroups of ${\rm{Isom}}^+(\H^n)$ have torsion-free subgroups of finite index, and hence discrete
subgroups have finite virtual cohomological dimension, so the only non-trivial point to address is the goodness of 
arithmetic lattices of simplest type. This is a consequence of \cite{BHW}, as explained in \cite[Theorem 6.5]{KRS}.

Proposition \ref{p:no-normal} implies that if $\wh{\Delta}$ had a non-trivial finite normal subgroup, then 
$\Delta$ would too. But $\Delta$ has no such subgroup, because if it did then, as in the proof of Corollary \ref{c:SL-no-N}
there would be a $\Delta$-invariant totally geodesic submanifold of positive codimension in $\H^n$ (the fixed point set
of the finite normal subgroup) and this is impossible since $\Delta$ is a lattice.
\end{proof} 
 
We should offset the preceding discussion by noting that it is unknown
whether there exist profinitely rigid lattices in ${\rm{Isom}}^+(\H^n)$ for $n>3$.

\subsection{Flexibility in rigid sandwiches}

Theorem \ref{normaliser} and the examples in Section \ref{s:new} 
illustrate how the foregoing results can be used to generate pairs of lattices $\G < \Omega$
both of which are profinitely rigid. 
Ultimately, one expects all  intermediate lattices  
$\G<\Delta<\Omega$ to be profinitely rigid as well, but the following construction indicates that this is less obvious
than one might naively imagine.

It is easy to see that if $N$ is finite then $N\times\Z$ is profinitely rigid.
Gilbert Baumslag \cite{gilbert} pointed out that, strikingly, this rigidity 
fails if one replaces $N\times\Z$ with a semidirect product. We shall modify his construction to exhibit pairs of
non-isomorphic groups  $H_1$ and $H_2$ that have the same finite quotients and have the same finite
index in a group $N\times\Z$ with $N$ finite. 
Thus $H_1$ is sandwiched between the profinitely rigid groups $\Z$ and $N\times\Z$ but is not itself profinitely rigid.

Our construction applies (mutatis mutandis) to each of the pairs of metacyclic groups considered by Baumslag in 
\cite{gilbert}, but for the sake of clarity we concentrate on 
$$
B_1 = (\Z/25)\rtimes_\alpha \Z \ \ \ \ \ \ \ B_2 = (\Z/25)\rtimes_\beta \Z
$$
where, in multiplicative notation, $\alpha\in{\rm{Aut}}(\Z/25)$ is $\alpha(x)=x^6$ and $\beta(x)=x^{11}$. Note
that $\alpha = \beta^3$ and $\beta=\alpha^2$ generate the same cyclic subgroup of order $5$ in ${\rm{Aut}}(\Z/25)$.
One can prove by direct argument that $B_1\not\cong B_2$ but $\wh{B}_1\cong\wh{B}_2$. 

Let $N = (\Z/25)\rtimes_\alpha (\Z/5)=\<x,y\>$, where $x$ generates the first factor and the generator $y$ in the second
factor acts as $\alpha$. Let $t$ be a generator for the second factor of $N\times \Z$. Then
$H_1 = \< x, yt\>$ and $H_2 = \< x, y^2t\>$ both have index $5$ in $N\times\Z$, and $H_1\cong B_1$ while $H_2\cong B_2$.

By replacing $B_1$ and $B_2$ in the above construction by the surface-bundle groups of Hempel \cite{hempel}, one
can arrange for all the groups in the sandwich to be fundamental groups of closed 3-orbifolds.

\section{Kleinian groups: Theorems \ref{t1}  and \ref{t2}}
\label{klein}

In this section, we complete the proofs of Theorems \ref{t1} and \ref{t2} 
and then reflect on how these results can be used simplify arguments in \cite{BMRS1}.\\[\baselineskip]
\noindent{\bf Proof of Theorem \ref{morefinite1}:}~Let $\G <{\rm{PSL}}(2,\C)$ be a finitely generated, non-elementary Kleinian group
and let $H<\G$ be a subgroup of finite index $d>1$. If every finite quotient of $H$ is also a quotient of $\G$,
then  Propositions \ref{p1} and \ref{p2}  yield an epimorphism $\wh{\G}\to \wh{H}$ with finite kernel.
Corollary \ref{c:SL-no-N} tells us that $\G$ does not contain a non-trivial, finite normal subgroup, so in fact $\wh{\G}\cong \wh{H}$. 

Proposition \ref{correspondence} then provides an index-$d$ subgroup $H_1<H$, corresponding to $H<\G$, 
with $\wh{H}_1\cong \wh{H} \cong \wh{\G}$.   Repeating this argument, we obtain an infinite sequence
of subgroups $H_n<\G$ with $[\G:H_n]=d^n$ such that $\wh{H}_n\cong\wh{\G}$. 

If $\G$ is torsion-free and has finite co-volume, then this 
contradicts Liu's result \cite{Liu} that there can be only finitely many finite-volume hyperbolic $3$-manifolds whose fundamental groups 
have the same profinite completion.  (The covolume of $H_n$ increases with $n$, so no two of the manifolds $\H^3/H_n$ are homeomorphic,
by Mostow-Prasad rigidity.)
This completes the proof in the torsion-free case.

If $\G$ is a lattice with torsion, then we pass to a torsion-free subgroup of finite index $\Delta<\G$ and consider
the subgroups $\Delta_n<H_n$ obtained from the isomorphism $\widehat{\G}\cong\wh{H}_n$
via the correspondence of Proposition \ref{correspondence}. Then $\wh{\Delta}_n\cong \wh{\D}$ for all $n$.
As $[H_n:\Delta_n]= [\G:\Delta]$, the covolume of the lattices $\Delta_n$ increases with $n$.  
Lemma \ref{l:good-tf} assures us that $\Delta_n \hookrightarrow \wh{\Delta}$ is torsion-free. Thus we once again
obtain a contradiction to Liu's Theorem.

Proposition \ref{p:inf-vol} deals with the case where $\G$ has infinite co-volume.
\qed
 
 \bigskip

\noindent{\bf Proof of Theorem \ref{main1}:}~ We must show that if $\G<{\rm{PSL}}(2,\C)$ is a lattice and $H<\G$
is a proper subgroup, then $\wh{\G}$ is not isomorphic to $\wh{H}$. In the light of Theorem \ref{morefinite1} 
we may assume that $H$ is of infinite index. If a finitely generated, residually finite  group has the same profinite 
completion as a finitely generated virtually abelian group, then it is itself virtually abelian; this distinguishes $\G$ from
its elementary subgroups. When $H$ is non-elementary of
infinite index,  Lemma \ref{l:l2} tells us that $b_1^{(2)}(H)\neq 0$ while  $b_1^{(2)}(\G)= 0$. And $b_1^{(2)}$ 
 is an invariant of $\wh{\G}$ (Lemma \ref{l2}).  \qed
 
 \bigskip

\subsection{Reworking the end game in \cite{BMRS1}} \label{BMRS_redo} 
The proofs used in \cite{BMRS1} and \cite{BMRS2} to establish the profinite rigidity of 
arithmetic Fuchsian and Kleinian groups such as  the  triangle group $\Delta(3,3,4)$, the Bianchi group $\PSL(2,\Z[\omega])$, and the fundamental
group of the Weeks manifold, had two main steps. The first and deepest part of the proof 
uses Galois rigidity of these examples to deduce that if $\G$ is one of the above groups and
$\Lambda$ is any finitely generated, residually finite group with $\wh{\Lambda}\cong\wh{\G}$, then there exists a homomorphism
$\Lambda \rightarrow \G$ with non-elementary image, $L$ say.  A further 
suite of arguments of a more {\em ad hoc} nature was then used
in  \cite{BMRS1} and \cite{BMRS2}, exploiting specific characteristics of the finite index subgroups of $\G$  to show that $L=\G$. 
We then had an epimorphism $\wh{\G} = \wh{\Lambda}\to \wh{L}= \wh{\G}$, and the Hopfian 
property of  profinite groups (explained in the last paragraph of the proof of Proposition \ref{p2})
assures us that this is injective, hence $\Lambda\cong\G$.  The Galois rigidity of these examples is a consequence of a fine understanding of the arithmetic of the invariant
trace-field and quaternion algebra of the given lattice.

Theorem \ref{main1} provides a more uniform and conceptual alternative to this second suite of arguments, 
removing the need to analyse the case where $L$ has finite index in $\G$. 
(The case where $L$ has infinite
index is easy to deal with using topological arguments.) 

\begin{proposition}\label{p3}
Let $\G$ be a non-elementary Kleinian
group and let $\Lambda$ be a finitely generated, residually finite group such that there is a homomorphism $\Lambda\rightarrow \G$ whose image is a proper subgroup of finite index in $\G$.  Then $\wh{\Lambda}$ is not isomorphic
to $\wh{\G}$.
\end{proposition}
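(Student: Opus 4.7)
The plan is to reduce the proposition directly to Theorem \ref{morefinite1}. Let $\phi\colon \Lambda\to\G$ be the given homomorphism and write $H=\phi(\Lambda)$, so by hypothesis $H$ is a proper subgroup of finite index in the non-elementary Kleinian group $\G$. Since $\phi$ factors as $\Lambda\twoheadrightarrow H\hookrightarrow \G$, every finite quotient of $H$ is automatically a finite quotient of $\Lambda$.

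Applying Theorem \ref{morefinite1} to $\G$ and the proper finite-index subgroup $H$, I obtain a finite group $Q$ with $H\twoheadrightarrow Q$ but $\G\not\twoheadrightarrow Q$. Composing with the surjection $\Lambda\twoheadrightarrow H$ yields $\Lambda\twoheadrightarrow Q$. Hence $Q\in\mathcal{C}(\Lambda)\ssm\mathcal{C}(\G)$, and Theorem \ref{allfinite} gives $\wh{\Lambda}\not\cong\wh{\G}$, as required.

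The only real obstacle has already been overcome, namely Theorem \ref{morefinite1} itself. Indeed, the whole point of this proposition is that Theorem \ref{morefinite1} replaces the delicate, case-by-case analysis used in \cite{BMRS1, BMRS2} to upgrade a non-elementary finite-index image $L=\phi(\Lambda)<\G$ to the equality $L=\G$: one no longer needs that equality, because the mere existence of a proper finite-index image $H$ already produces a finite group separating $\mathcal{C}(\Lambda)$ from $\mathcal{C}(\G)$, and this is incompatible with $\wh{\Lambda}\cong\wh{\G}$.
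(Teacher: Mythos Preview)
Your proof is correct and is precisely the argument the paper has in mind: Proposition~\ref{p3} is stated without proof because it is an immediate consequence of Theorem~\ref{morefinite1}, exactly via the reduction you wrote down. Your choice to invoke Theorem~\ref{morefinite1} rather than Theorem~\ref{main1} is the right one, since Proposition~\ref{p3} is stated for arbitrary (finitely generated) non-elementary Kleinian groups, not just those of finite covolume.
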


\section{New examples of profinitely rigid Kleinian groups}
\label{s:new}

We closed the previous section by discussing how Theorems \ref{t1} and \ref{t2} allow one to avoid  significant difficulties
in the second stage of the proofs  of profinite rigidity  in \cite{BMRS1} and \cite{BMRS2}. This raises the
hope that in other cases where one can prove Galois rigidity, one might be able to use Proposition \ref{p3} to deduce 
profinite rigidity. In this section we shall illustrate the potential of this approach by exhibiting 
new examples of profinitely rigid Kleinian groups. We shall also illustrate the potential of the observations
in Section \ref{s:go-up} 
by identifying further profinitely rigid groups among over-lattices of our first example. Along the way, we shall
describe an infinite family of  Galois rigid Kleinian groups. 

Let $K\subset S^3$ denote the figure-eight knot, let  $Q_n$ be the orbifold obtained by 
$(n,0)$-Dehn filling on $K$ and let $M_n$ be the $n$-fold cyclic branched cover of $K$.
Note that $M_n$ can also be regarded as an $n$-fold cyclic (orbifold) cover of $Q_n$; this is
the maximal abelian cover.
 When $n=2$, $M_2$ is the Lens Space $L(5,3)$, and so $Q_2$ is finite; when $n=3$, $M_3$ is the flat manifold known as the Hantzsche-Wendt
 manifold and so $Q_3$ is a Euclidean crystallographic group; when $n\geq 4$, $M_n$ and $Q_n$ are hyperbolic 
(see \cite{CHK} for example).  In addition, it was shown in \cite{HKM} that $\Gamma_n=\pi_1(M_n) \cong F(2,2n)$ where $F(2,2n)$ is one of the
Fibonacci groups of \cite{Co} (a presentation for $F(2,8)$ is given below). Let $\Delta_n$ be the orbifold fundamental group of $Q_n$. Then
 $Q_n=\H^3/\Delta_n$ and $\G_n=[\Delta_n,\Delta_n]$ is a subgroup of index $n$.   

The following theorem will be proved in the next section. 

\begin{theorem}
\label{new}
In the notation established above,
\begin{enumerate}
\item $\Delta_p$ is Galois rigid for all primes $p\geq 5$;
\item $\Delta_4, \Delta_6$ and $\Delta_9$ are Galois rigid;
\item $\Gamma_4$ is Galois rigid. 
\end{enumerate}
\end{theorem}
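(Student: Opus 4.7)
The plan is, for each of the groups $\Delta_n$ (with $n=p$ prime $\geq 5$ or $n\in\{4,6,9\}$) and $\Gamma_4$, to describe the Zariski-dense locus of the $\PSL(2,\C)$-character variety explicitly and verify that, up to conjugacy, it coincides with the orbit of the inclusion under the Galois group of its trace field. For the groups $\Delta_n$, I would begin with the standard two-generator presentation of the figure-eight knot group, $\pi_1(S^3\setminus K)=\langle a,b \mid aw=wb\rangle$ with $w=ba^{-1}b^{-1}a$, in which $a$ and $b$ are conjugate meridians; the group $\Delta_n$ is then obtained by adjoining $a^n=1$. The $\PSL(2,\C)$-character variety of the knot group is the classical Riley curve, a plane curve in the trace coordinates $x=\tr(a)$ and $y=\tr(ab^{-1})$. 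The Dehn-filling relation forces $x=2\cos(k\pi/n)$ for some $k$ coprime to $n$, and substituting these values into the Riley polynomial yields a zero-dimensional variety whose components can be analyzed directly.

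For part $(1)$, when $n=p$ is prime and $p\geq 5$, the key step is to prove that the one-variable polynomial in $y$ obtained by substituting $x=2\cos(\pi/p)$ into the Riley polynomial is irreducible, or factors in the predicted way, over the compositum of $\Q(\sqrt{-3})$ (the trace field of the figure-eight knot) and $\Q(\cos(\pi/p))$. Combined with an identification of the trace field $K_{\Delta_p}$ and a count of its Galois embeddings, this matches the number of Zariski-dense conjugacy classes with the number of Galois conjugates of the inclusion. The exceptional values $n\in\{4,6,9\}$ of part $(2)$ are treated by the same scheme on a case-by-case basis, where one verifies that each irreducible factor of the specialized Riley polynomial gives a Galois-conjugate Zariski-dense representation, and that the obvious reducible or elementary solutions (e.g., those arising from trace values making the commutator elliptic or parabolic) are precisely the non-Zariski-dense ones.

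For part $(3)$, the Galois rigidity of $\Gamma_4\cong F(2,8)$, I would work with a (Tietze-reduced) Fibonacci presentation and compute the character variety using computer algebra, which is the content of Section $7$. The aim is to show that the Zariski-dense locus is zero-dimensional of degree exactly $[K_{\Gamma_4}:\Q]$, and that each isolated character matches a Galois conjugate of the inclusion by comparing traces of a small generating set. An alternative, more conceptual route would exploit the normal inclusion $\Gamma_4\triangleleft\Delta_4$ of index $4$: any Zariski-dense representation of $\Gamma_4$ either extends to $\Delta_4$, in which case part $(2)$ applies directly, or its $\Z/4$-orbit under the induced action on the character variety furnishes enough extra structure to force the same conclusion. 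The main obstacle throughout is controlling unexpected Zariski-dense components of the character variety: for large primes $p$ this reduces to a Galois-theoretic irreducibility statement about the Riley polynomial specialized at $\cos(\pi/p)$; for the finite list $n\in\{4,6,9\}$ and for $\Gamma_4$, the verification is essentially computational and rests on the Mathematica calculations of Section $7$, with the subtle secondary point that one must confirm no reducible specialization accidentally contributes a new Zariski-dense character.
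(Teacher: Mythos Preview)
Your proposal follows essentially the same route as the paper --- the Riley polynomial for the knot group, specialization at $2\cos(k\pi/n)$ for parts (1) and (2), and a direct Mathematica computation from the Fibonacci presentation for part (3) --- but there are two points where the paper's argument differs from what you sketch and is simpler.

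First, the irreducibility you identify as the ``main obstacle'' in part (1) is not a Galois-theoretic statement and does not involve $\Q(\sqrt{-3})$ at all. The paper's polynomial $P(T,R)=1+R-R^2-2T^2+RT^2$ is quadratic in $R$, and after specializing $T=2\cos(\pi/p)$ its coefficients lie in the totally real field $\Q(\cos(\pi/p))$. If this quadratic were reducible, the trace field of the faithful discrete representation would be totally real, contradicting the fact that $Q_p$ is a closed hyperbolic $3$-orbifold. That is the entire argument; no compositum with $\Q(\sqrt{-3})$ is needed or correct, since the trace field of $\Delta_p$ need not contain $\sqrt{-3}$. The count is then immediate: $n-1$ $\PSL(2,\C)$ characters against a trace field of degree $n-1$.

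Second, for part (2) the paper does not analyse irreducible factors or elementary degenerations case by case. Instead it uses the single observation that imposing $\alpha^2=1$ (resp.\ $\alpha^3=1$) makes the quotient finite (resp.\ virtually abelian), so in any Zariski-dense representation of $\Delta_n$ with $n\in\{4,6,9\}$ the image of $\alpha$ has order at least $4$; this cuts the admissible specializations of $T$ down to exactly those for which the argument of part (1) applies verbatim. Your alternative approach to $\Gamma_4$ via the normal inclusion $\Gamma_4\triangleleft\Delta_4$ is plausible but not what the paper does; it performs the direct character-variety computation only.
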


Taking Theorem \ref{new}(3) as our starting point, we shall construct a new family of profinitely rigid groups.

\begin{theorem}\label{t:G4} $\Gamma_4$ is profinitely rigid.
\end{theorem}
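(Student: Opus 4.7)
The strategy follows the two-stage template of \cite{BMRS1}, with Theorem \ref{new}(3) supplying the Galois rigidity needed in the first stage and Proposition \ref{p3} streamlining the second. Let $\Lambda$ be a finitely generated, residually finite group with $\wh{\Lambda}\cong\wh{\Gamma}_4$; the aim is to prove $\Lambda\cong\Gamma_4$.

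The first step is to invoke the framework of \cite[Theorem 4.8 and Corollaries 4.10, 4.11]{BMRS1} to produce, after composing with a Galois automorphism if necessary, a Zariski-dense representation $\rho:\Lambda\to\PSL(2,\C)$ whose image $L$ is contained in $\Gamma_4$. This reduction relies on the Galois rigidity of $\Gamma_4$ established in Theorem \ref{new}(3) together with the character-variety information for $\Gamma_4$ supplied by the computations described in Section 7. I expect this to be the main obstacle: translating the profinite hypothesis $\wh{\Lambda}\cong\wh{\Gamma}_4$ into an actual representation requires fine control on $H^1$ with adjoint coefficients and on the Galois orbits of the inclusion. By Zariski density, $L$ is a non-elementary Kleinian subgroup of $\Gamma_4$.

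The second step is to show $[\Gamma_4:L]<\infty$. Since $M_4$ is a closed hyperbolic $3$-manifold, $\Gamma_4$ is LERF by \cite{Ag,Wi} and $b_1^{(2)}(\Gamma_4)=0$. If $L$ had infinite index in $\Gamma_4$, it would be a non-elementary Kleinian group of infinite co-volume, hence $b_1^{(2)}(L)>0$ by Lemma \ref{l:l2}; the standard topological end-game arguments of \cite{BMRS1}, which exploit LERF together with the profinite invariance of the first $\ell^2$-Betti number (Lemma \ref{l2}), then yield the required contradiction. Given that $L$ has finite index, Proposition \ref{p3} immediately rules out $L\subsetneq\Gamma_4$, so $L=\Gamma_4$ and $\rho$ is surjective.

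The final step is routine: $\rho$ induces a continuous surjection $\wh{\rho}:\wh{\Lambda}\twoheadrightarrow\wh{\Gamma}_4$, and since $\wh{\Lambda}\cong\wh{\Gamma}_4$ is a finitely generated profinite group, the Hopfian property recalled in the closing paragraph of the proof of Proposition \ref{p2} forces $\wh{\rho}$ to be an isomorphism. Residual finiteness of $\Lambda$ then gives $\ker\rho\subseteq \Lambda\cap\ker\wh{\rho}=\{1\}$, so $\rho$ is an isomorphism $\Lambda\cong\Gamma_4$, as required.
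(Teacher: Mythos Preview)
Your overall strategy matches the paper's, but there are two places where the argument as written does not close.

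First, in Stage~1 the machinery of \cite[Corollary 4.10]{BMRS1} does not land you directly in $\Gamma_4$. What it produces---given Galois rigidity of $\Gamma_4$, local uniformity of the invariant quaternion algebra $B$, and the fact that $B$ has type number $1$, arithmetic inputs you do not mention---is a representation $\phi:\Lambda\to\Gamma_{\mathcal{O}}^1$ into the image of the norm-one group of the unique maximal order, with non-elementary image $L$. Since $[\Gamma_{\mathcal{O}}^1:\Gamma_4]=2$, a further step is required: one computes $\Gamma_4^{\rm ab}\cong\Z/3\Z\oplus\Z/15\Z$, which has odd order, so $\Lambda$ (and hence its quotient $L$) cannot map onto $\Z/2\Z$, forcing $L\subset\Gamma_4$. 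You skip this descent.

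Second, your treatment of the infinite-index case does not work as stated. You deduce $b_1^{(2)}(L)>0$ from Lemma~\ref{l:l2}, but the profinite invariance in Lemma~\ref{l2}(2) compares $b_1^{(2)}(\Lambda)$ with $b_1^{(2)}(\Gamma_4)$, not with $b_1^{(2)}(L)$, and there is no general inequality relating the $\ell_2$-Betti number of a group to that of a quotient; LERF does not bridge this either. The paper instead uses the ordinary first Betti number: if $L$ has infinite index in the torsion-free lattice $\Gamma_4$, then $L$ is the fundamental group of a compact $3$-manifold whose boundary has a component of positive genus, so by Poincar\'e--Lefschetz duality (``half lives, half dies'') $b_1(L)>0$. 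Since $L$ is a quotient of $\Lambda$, this gives $b_1(\Lambda)>0$, contradicting the finiteness of $\Lambda^{\rm ab}\cong\Gamma_4^{\rm ab}$. Once these two points are addressed, your finite-index step via Proposition~\ref{p3} and the Hopfian endgame are exactly as in the paper.
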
 

Our proof  relies on the following additional information concerning $\G_4$. It is proved in \cite[\S 4.1]{Re0} that $\G_4$ is an arithmetic Kleinian group with (invariant) trace-field $k=\Q(\sqrt{-3})$ 
and that its invariant quaternion algebra $B$ over $k$  is ramified at the places $\nu_2$ and $\nu_3$ associated to the prime ideals of norm $4$
and $3$, respectively, in $k$. (The arithmeticity of $\Delta_4$ was also proved in \cite{HLM}.)  Note that these are the unique primes of these
norms in $k$. 
This implies that $B$ is {\em locally uniform} in the sense of \cite[Definition 4.10]{BMRS1}. Using volume considerations, it is also shown in
\cite[pp. 172--173]{Re0} that $\Delta_4^{(2)}=\Gamma_{\mathcal{O}}^1$, the image in $\PSL(2,\C)$
of the group of elements of norm one in the unique (up to $B^*$-conjugacy) maximal order $\mathcal{O} \subset B$ (that is to say, $B$ has type number $1$) 
It is clear from the construction of $Q_n$ that $\Delta_4^{\ab}\cong \Z/4\Z$, 
so $\Gamma_4=[ \Delta_4, \Delta_4] \subset \Delta_4^{(2)}$ has index $2$. 
It was shown in \cite{HKM} that $\Gamma_n$ is isomorphic to the Fibonacci group $F(2,2n)$ from \cite{Co}, and 
the abelianization of  $\G_4$  can be calculated from the presentation of $F(2,8)$
given below, $\G_4^{\rm{ab}}=H_1(M_4,\Z)\cong \Z/3\Z \oplus  \Z/15\Z$.  
\medskip

\begin{proof}
Suppose that $\Lambda$ is a finitely generated residually finite group with $\wh{\G}\cong\wh{\Lambda}$. Using Theorem \ref{new}(3), local
uniformity and,  
the uniqueness of the maximal order $\mathcal{O}$ (as noted above), together with \cite[Corollary 4.10]{BMRS1}, we can build a representation 
$\phi :\Lambda\rightarrow \Gamma_{\mathcal{O}}^1$, with non-elementary image $L$ say. We claim that $L\subset \Gamma_4$. This follows from
the calculation of $\G_4^{\rm{ab}} $ and the fact that
 $[\Gamma_{\mathcal{O}}^1:\Gamma_4]=2$: if $L$ were not contained 
in $ \G_4$, then $L\cap \G_4$ would have index $2$ in $L$,  but since $\G_4^{\rm{ab}}$ cannot map onto $\Z/2\Z$,
neither can $\G_4$ or $L$.
As $\G_4$ is torsion-free, if $L$ were of infinite index in $\G_4$ then it would be the fundamental group of a 3-manifold whose compact core had
a boundary component of positive genus, and  it would follow by duality (``half lives, half dies") that $L$ had infinite abelianization. But
$\G_4$ has finite abelianization, so $\Lambda$ (hence $L$) does too. Thus we may assume that $L$ has finite index in $\G_4$, in which
case Proposition \ref{p3} applies to show $L=\G_4$. The argument is completed, as in Section \ref{BMRS_redo}, by appealing to
the Hopfian property for profinite groups.
\end{proof}

\subsection{Some lattices containing $\G_4$}
The results in this section illustrate how the ideas presented in Section \ref{s:go-up} can be used in practice.
A discussion of the lattices that contain  $\G_4$ as a normal subgroup will suffice to make this point but a wider analysis covering
all of the lattices containing $\G_4$ would be feasible.

Borel \cite{Borel}  classified the maximal lattices in the commensurability classes of arithmetic lattices. In the case 
of $\G_4$,  as noted above, the invariant quaternion algebra $B$ has type number $1$, and so there is a unique lattice of minimal co-volume in its commensurability class, up to conjugacy.
Borel's volume formula shows that this lattice $\G_{\emptyset,\emptyset}$ has volume $v_0/8$,
where $v_0$ is the volume of the regular ideal simplex in $\H^3$. Jones and Reid \cite{JR}
obtain the following presentation,
$$
\G_{\emptyset,\emptyset}=\langle x, y, z \mid x^2 = y^2 = z^2 = (xyz)^4 = (xyxyxz)^2 = (yzxz)^2 = 1 \rangle.
$$

The arguments on pages 172--176 of \cite{Re0} identify $\G_{\emptyset,\emptyset}$ as the normalizer in ${\rm{PSL}}(2,\C)$
of $\Gamma_{\mathcal{O}}^1$ and we shall see in a moment that $\G_{\emptyset,\emptyset}$ is also the normalizer of
$\G_4$. Thus we have a chain of normal subgroups
$$
\G_4 < \Delta_4^{(2)} = \Gamma_{\mathcal{O}}^1 < \Delta_4 < \G_{\emptyset,\emptyset}
$$
where the first two are index-$2$ and the last is index-$4$. The
other lattices $\Omega$ with $\G_4<\Omega<\G_{\emptyset,\emptyset}$ correspond to subgroups in
$\G_{\emptyset,\emptyset}/\G_4$, which is a group of order $16$.

It is shown in \cite[pp. 172-173]{Re0} that $M_4$ double covers a manifold $M$ which is now known as $\Vol(3)$.
This is the only manifold that $M_4$ covers non-trivially.
The name $\Vol(3)$ was given to 
reflect the fact that this manifold had the third smallest known volume of a closed hyperbolic 3-manifold (namely $v_0$).  As noted in Section \ref{intro}, it has recently been proved \cite{GHMTY} that $\Vol(3)$ is the unique closed orientable
hyperbolic $3$-manifold of volume $v_0$. (It was previously known that $\Vol(3)$ was 
the only closed orientable arithmetic hyperbolic 3-manifold with volume is $v_0$.)
In particular, up to conjugacy, $\vv_3:=\pi_1(\Vol(3))$ is the only
torsion-free lattice among the $\Omega$ described above. It is proved in \cite{JR} that $\vv_3$ is normal in $\G_{\emptyset,\emptyset}$ with quotient a dihedral group of order $8$.
Moreover $H_1(\Vol(3),\Z) = \Z/3\Z\times \Z/6\Z$, so $\vv_3$ has a unique subgroup of index $2$, which is $\G_4$.
As $\G_4$ is characteristic in $\vv_3$, it too is normal in $\G_{\emptyset,\emptyset}$. And since $\G_{\emptyset,\emptyset}$ is a maximal subgroup $\PSL(2,\C)$, 
this is precisely the normalizer of $\G_4$ in $\PSL(2,\C)$. 

\begin{corollary}\label{c:up-from-G4}
The groups $\Gamma_{\mathcal{O}}^1,\ \Delta_4,\ \vv_3$ and $\G_{\emptyset,\emptyset}$  are all
profinitely rigid in the absolute sense.
\end{corollary}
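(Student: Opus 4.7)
Our plan is to bootstrap from Theorem \ref{t:G4} (profinite rigidity of $\G_4$) using the machinery of Section \ref{s:go-up}. The easiest case is $\G_{\emptyset,\emptyset}$: by the arithmetic discussion preceding the corollary, $\G_{\emptyset,\emptyset}$ is the unique maximal lattice containing $\G_4$, and it contains $\G_4$ as a normal subgroup, so it coincides with the normaliser of $\G_4$ in $\PSL(2,\C)$. Thus Theorem \ref{normaliser} immediately yields the profinite rigidity of $\G_{\emptyset,\emptyset}$.

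For each remaining $\Omega\in\{\vv_3,\Gamma_{\mathcal{O}}^1,\Delta_4\}$, suppose $\Lambda$ is finitely generated and residually finite with $\wh{\Lambda}\cong\wh{\Omega}$. Since $\G_4$ is normal in $\Omega$ and profinitely rigid, Corollary \ref{c:only-latt} (applied with $\G=\G_4$ and $\Delta=\Omega$) identifies $\Lambda$ with a lattice in $\PSL(2,\C)$ containing $\G_4$ normally with index $[\Omega:\G_4]$. As every lattice containing $\G_4$ lies in the maximal arithmetic lattice $\G_{\emptyset,\emptyset}$, we may assume after conjugation that $\G_4\lhd\Lambda\le\G_{\emptyset,\emptyset}$; hence $\Lambda/\G_4$ is a subgroup of order $[\Omega:\G_4]$ of the finite group $\G_{\emptyset,\emptyset}/\G_4$ of order $16$. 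The remaining task is to pin down $\Omega$ among the finitely many such candidate subgroups.

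For $\vv_3$ this is immediate: $\vv_3$ is the unique torsion-free index-$2$ over-lattice of $\G_4$ in $\G_{\emptyset,\emptyset}$ (as noted before the corollary). Since $\vv_3$ is good and has finite cohomological dimension, Lemma \ref{l:good-tf} gives that $\wh{\vv_3}$ is torsion-free, so $\Lambda$ (which embeds in $\wh{\Lambda}\cong\wh{\vv_3}$) is torsion-free and therefore $\Lambda=\vv_3$. For $\Delta_4$ and $\Gamma_{\mathcal{O}}^1$ we invoke the profinite invariance of the abelianization: for a finitely generated group $\wh{\Lambda}^{\ab}=\wh{\Lambda^{\ab}}$, and finitely generated abelian groups are determined by their profinite completions, so $\Lambda^{\ab}\cong\Omega^{\ab}$. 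In particular $\Lambda^{\ab}\cong\Z/4$ when $\Omega=\Delta_4$, and combined with the fact that $\G_4=[\Delta_4,\Delta_4]$, this should force $\Lambda/\G_4$ to be a cyclic subgroup of order $4$ of $\G_{\emptyset,\emptyset}/\G_4$ realising the abelianization; Mostow--Prasad rigidity then gives $\Lambda\cong\Delta_4$. An analogous argument using the abelianization of $\Gamma_{\mathcal{O}}^1=\Delta_4^{(2)}$ (together with its torsion content, if needed) handles $\Gamma_{\mathcal{O}}^1$.

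The main obstacle I expect is the final case analysis: one must inspect the structure of the order-$16$ group $\G_{\emptyset,\emptyset}/\G_4$, using the explicit Jones--Reid presentation of $\G_{\emptyset,\emptyset}$ from \cite{JR} and the arithmetic description of $\Gamma_{\mathcal{O}}^1$, to verify that the abelianization (and, if necessary, a torsion comparison via goodness) really does isolate $\Delta_4$ and $\Gamma_{\mathcal{O}}^1$ among the candidate subgroups, possibly after identifying conjugate sublattices via Mostow--Prasad rigidity.
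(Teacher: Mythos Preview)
Your treatment of $\G_{\emptyset,\emptyset}$ and $\vv_3$ matches the paper's proof essentially verbatim (the paper phrases $\G_{\emptyset,\emptyset}$ in terms of Corollary \ref{c:only-latt} rather than Theorem \ref{normaliser}, but these are the same argument).

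Where you diverge is in the handling of $\Gamma_{\mathcal{O}}^1$, and consequently of $\Delta_4$. You attempt to reach $\Gamma_{\mathcal{O}}^1$ via the over-lattice machinery, which would require distinguishing it among the several index-$2$ over-lattices of $\G_4$ inside $\G_{\emptyset,\emptyset}$; you correctly flag this case analysis as the main obstacle and do not complete it. The paper sidesteps this entirely: since $\G_4$ has finite index in $\Gamma_{\mathcal{O}}^1$, any Zariski-dense $\PSL(2,\C)$-representation of $\Gamma_{\mathcal{O}}^1$ restricts to one of $\G_4$, and non-conjugate representations remain non-conjugate on restriction; hence Galois rigidity of $\G_4$ propagates upward to $\Gamma_{\mathcal{O}}^1=\Delta_4^{(2)}$, and the proof of Theorem \ref{t:G4} runs verbatim for $\Gamma_{\mathcal{O}}^1$. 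This buys the paper profinite rigidity of $\Gamma_{\mathcal{O}}^1$ without any subgroup enumeration.

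For $\Delta_4$ the paper then applies Corollary \ref{c:only-latt} with $\G=\Gamma_{\mathcal{O}}^1$ (not $\G_4$), reducing to lattices containing $\Gamma_{\mathcal{O}}^1$ with index $2$; a Magma enumeration of the index-$4$ subgroups of $\G_{\emptyset,\emptyset}$ shows that exactly one conjugacy class has abelianisation $\Z/4\Z$, and since $\G_{\emptyset,\emptyset}$ normalises $\Gamma_{\mathcal{O}}^1$, this pins down $\Delta_4$. Your approach via $\G_4$ and abelianisation is in the same spirit and would likely succeed with a similar computation, but the paper's route through Galois rigidity of $\Gamma_{\mathcal{O}}^1$ is what actually closes the argument.
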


\begin{proof} Non-elementary representations of $\Gamma_{\mathcal{O}}^1$ restrict to non-elementary representations of 
$\G_4$ and non-conjugate ones remain non-conjugate. Therefore $\Gamma_{\mathcal{O}}^1=\Delta_4^{(2)}$ is Galois
rigid and the proof of Theorem \ref{t:G4} shows that it is profinitely rigid.

It follows from Corollary \ref{c:only-latt} that in order to prove that $\vv_3$ is profinitely rigid, we need only distinguish it
from lattices of the same co-volume that contain $\G_4$ as a subgroup of index $2$.
By Corollary \ref{c:good-SL} we can also restrict to lattices that are torsion-free. And, as we noted above, $\vv_3$
is the unique such lattice. 

As $\G_{\emptyset,\emptyset}$ is the unique lattice of minimal co-volume containing $\G_4$, Corollary \ref{c:only-latt}
tells us that it too is profinitely rigid.   

Using Magma \cite{Mag} to enumerate the subgroups of index $4$ in $\G_{\emptyset,\emptyset}$, one finds that there are
$11$ conjugacy classes, only one of which has abelianization $\Z/4\Z$, so this must be $\Delta_4$. 
As $\G_{\emptyset,\emptyset}$ is the normalizer of $\Gamma_{\mathcal{O}}^1=\Delta_4^{(2)}$, it follows
that, up to conjugacy, $\Delta_4$ is the only lattice that contains $\Gamma_{\mathcal{O}}^1$ as a subgroup
of index $2$ and has abelianization $\Z/4\Z$. Thus  $\Delta_4$ is profinitely rigid, by Corollary \ref{c:only-latt}. 
\end{proof}

With further computation one could extend the above analysis to the other $\Omega$ with $\G_4<\Omega<\G_{\emptyset,\emptyset}$.
We mention one other example in order to illustrate a further point concerning the ideas in Section \ref{s:go-up},
namely that one can use $\Delta/\Lambda$ as an invariant when $\Lambda<\Delta$ is normal and profinitely rigid. In the
current setting, as $\G_{\emptyset,\emptyset}/\vv_3$ is dihedral of order 8, there is a unique lattice $\Omega_4<\G_{\emptyset,\emptyset}$
that contains $\vv_3$ as a normal subgroup such that the quotient is cyclic of order $4$. From Corollary \ref{c:only-latt} and
the last item in Proposition \ref{correspondence} we deduce:

\begin{corollary}\label{c:Omega4}
$\Omega_4$ is profinitely rigid.
\end{corollary}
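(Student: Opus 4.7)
The plan is to combine Corollary \ref{c:only-latt}, applied with the profinitely rigid normal subgroup $\vv_3<\Omega_4$, with the structural fact that $\G_{\emptyset,\emptyset}$ is the normalizer of $\vv_3$ in $\PSL(2,\C)$ together with the observation that $\G_{\emptyset,\emptyset}/\vv_3$ is dihedral of order $8$. Concretely, I would let $\Lambda$ be a finitely generated residually finite group with $\wh{\Lambda}\cong\wh{\Omega}_4$. Since $\vv_3$ is profinitely rigid by Corollary \ref{c:up-from-G4} and is normal of index $4$ in $\Omega_4$, Corollary \ref{c:only-latt} identifies $\Lambda$ with a lattice in $\PSL(2,\C)$ that contains $\vv_3$ as a normal subgroup of index $4$. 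Using Mostow--Prasad rigidity to conjugate, I may assume $\vv_3\subset\Lambda\subset\PSL(2,\C)$, and the final clause of Proposition \ref{correspondence} yields $\Lambda/\vv_3\cong\Omega_4/\vv_3\cong\Z/4\Z$.

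Next I would argue that $\G_{\emptyset,\emptyset}$ is the normalizer of $\vv_3$ in $\PSL(2,\C)$. Since $H_1(\Vol(3),\Z)\cong\Z/3\Z\oplus\Z/6\Z$ has a unique subgroup of index $2$, $\G_4$ is the unique index-$2$ subgroup of $\vv_3$, hence characteristic in $\vv_3$. Therefore any element normalizing $\vv_3$ also normalizes $\G_4$, and so lies in the normalizer of $\G_4$, which is $\G_{\emptyset,\emptyset}$; the reverse inclusion is the normality of $\vv_3$ in $\G_{\emptyset,\emptyset}$ noted in the discussion preceding Corollary \ref{c:up-from-G4}. Consequently $\Lambda\subset\G_{\emptyset,\emptyset}$.

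Finally, $\Lambda/\vv_3$ is a cyclic subgroup of order $4$ inside the dihedral group $\G_{\emptyset,\emptyset}/\vv_3$ of order $8$. Such a dihedral group contains a unique cyclic subgroup of order $4$, so $\Lambda/\vv_3$ coincides with $\Omega_4/\vv_3$, which forces $\Lambda=\Omega_4$. I do not anticipate a significant obstacle here; all of the deep input has already been absorbed into the profinite rigidity of $\vv_3$, and what remains is a careful bookkeeping argument combining the normality-preserving clauses of Corollary \ref{c:only-latt} and Proposition \ref{correspondence} with the elementary observation about cyclic subgroups of $D_8$.
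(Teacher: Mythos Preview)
Your proposal is correct and follows essentially the same approach as the paper: the paper's proof is the one-line deduction ``From Corollary \ref{c:only-latt} and the last item in Proposition \ref{correspondence} we deduce,'' and you have simply unpacked this, explicitly verifying that $\G_{\emptyset,\emptyset}$ is the normalizer of $\vv_3$ (via the characteristic subgroup $\G_4$) and invoking the uniqueness of the cyclic subgroup of order $4$ in $D_8$. These are exactly the implicit ingredients the paper relies on in its setup of $\Omega_4$ immediately before the corollary.
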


\section{Galois rigidity for cyclic branched covers of the figure-eight knot}\label{s:galois}

Our proof of Galois rigidity for $\Delta_n$ (Theorem \ref{new}) 
is based on the understanding of the ${\rm{PSL}}(2,\C)$ representations of the
fundamental group of the figure-eight knot complement
$$\pi_1(S^3\setminus K) = \<\a,\b \mid (\a\b^{-1}\a^{-1}\b)\a(\b^{-1}\a\b\a^{-1}=\b\>.$$ 
In this presentation, the (conjugate) generators $\a$ and $\b$ represent meridians of the knot, so $\D_n=\pi_1^{\rm{orb}}(Q_n)$
is obtained by adding the relation $\a^n=1$. It is easy to check that $\D_2$ is the dihedral group of order $8$ while
$\D_3$ is a virtually abelian group.

\subsection{$\Delta_n$ is Galois rigid if $n\ge 5$ is prime}  
Although we wish to understand $\PSL(2,\C)$ representations, it will be convenient to work with $\SL(2,\C)$ representations.  
To that end, 
consider the  canonical component $X_0$ of the $\SL(2,\C)$ character variety for $\pi_1(S^3\setminus K)$; this is
the component containing the characters of all irreducible $\SL(2,\C)$ representations of $\pi_1(S^3\setminus K))$. It 
can be identified with the vanishing set of the polynomial
$$P(T,R)=1 + R - R^2 - 2T^2 + RT^2,$$
where $T=\chi_\rho(\a)=\chi_\rho(\b)$ and $R=\chi_\rho(\a\b)$ (see \cite[Section 7]{CRS} for example).  
The character of each irreducible representation of $\Delta_n$ corresponds to a point on $X_0$ where
the  relation $\a^n=1$ holds in $\PSL(2,\C)$, i.e. where $T$ specializes to  
$T_{n,k}=2\cos(k\pi/n)$ with $k\in\{1,\dots,n-1\}$. Recall that when $n$ is prime $\Q(\cos\pi/n)$ is a Galois extension of $\Q$ of degree $(n-1)/2$, and moreover given $k\in \{1,\dots,n-1\}$ there is exactly one $j$ such that 
$2\cos(j\pi/n)=-2\cos(k\pi/n)$. 

Each of these specializations gives rise to a character determined 
by solving for $R$ in 
$$p_{n,k}(R) = 1 + R - R^2 - 2T_{n,k}^2 + RT_{n,k}^2.$$ 
Thus $\Delta_n$ has (at most) $2(n-1)$
irreducible $\SL(2,\C)$
representations, up to conjugacy. Furthermore, the preceding comment about signs implies that the set 
of characters for these representations  can be divided up so that each character in one half is the negative of 
one in the other half. Thus, on projecting  
to $\PSL(2,\C)$, we have exactly $n-1$ representations, up to conjugacy. Hence we will be done if we can argue that the trace-field of $\Delta_n$
has degree $(n-1)$ (since $n\geq 5$ is prime the trace-field coincides with the invariant trace-field).

To see that this is the case, we consider the character of the discrete faithful representation:
here, $T$ specializes to $T_{1,n}= 2\cos(\pi/n)$ and we note that $p_{n,1}(R)$ must be irreducible over $\Q(\cos(\pi/n))$,
for if not then the trace field $\Q(\tr(\Delta_n)) = \Q(T_{1,n},R_n)=\Q(T_{1,n},\-{R}_n)$ would be totally real, which it
is not since $Q_n$ is a closed hyperbolic $3$-orbifold. (Here, we have denoted the roots of $p_{1,n}(R)$ by $R_n,\-{R}_n$.) 
Thus the trace field of $\Delta_n$ is a number field of degree $(n-1)$ and so $\Delta_n$ is Galois rigid.
(The  representations corresponding to characters arising from the specializations $T=T_{k,n}$ with $k>1$ are appearing 
as Galois conjugates of the discrete faithful representation.) \qed
\medskip

\subsection{$\Delta_n$ is Galois rigid if $n\in\{4,6,9\}$}~We noted earlier that 
$\pi_1(S^3\setminus K)$ becomes finite if one imposes the relation $\a^2=1$ and virtually abelian if one imposes $\a^3=1$.
Thus, for $n\in\{4,6,9\}$, in any irreducible representation of $\Delta_n$, the image of $\alpha$ must have order at least $4$. With this additional 
information, the preceding proof applies upon specializing $T$ in the following way: $\pm 2\sqrt{2}$ (when $n=4$), $\pm 2\sqrt{3}$ (when $n=6$) and $\pm 2\cos(\pi/9)$, $\pm 2\cos(2\pi/9)$, $\pm 2\cos(4\pi/9)$ ($n=9$).
\qed
\medskip
\begin{remark} Note Galois rigidity fails for $n=8$ since $\Delta_8$ admits an epimorphism onto  $\Delta_4$. \end{remark}

\bigskip

\subsection{$\G_4$ is Galois rigid}~This proof is of a different nature to the one above. Instead of locating representations within the well-understood character variety of
a group mapping onto $\G_4$,  we take a more brute force approach, analysing directly what the irreducible representations of 
$\G_4$ can be, starting from a specific finite presentation and making explicit 
computations in Mathematica \cite{Math}, as described below. The Mathematica notebook is available from the authors upon request. The
presentation that we work with comes from the isomorphism $\G_4\cong F(2,8)$ with a Fibonacci group,
$$\<x_1,\dots,x_8 \mid  x_1x_2x_3^{-1}, x_2x_3x_4^{-1}, \dots, x_6x_7x_8^{-1}, x_7x_8x_1^{-1}, x_8x_1x_2^{-1}\>,$$ 
which reduces to the following two generator presentation: 
\begin{equation}
\label{eq1}
\G_4=\<a,b \mid b a^{-2} b a^{-1} b^2 a b^2 a^{-1},    a^2 b a b^2 a b a^2 b^{-1}\>
\end{equation}
where $a=x_3$ and $b=x_4$. One can check directly (by hand or computer)
that imposing either of the additional relations $a^2=1$ or $a^3=1$
yields a finite quotient, and this will be useful in our analysis of 
the characters of irreducible $\SL(2,\C)$-representations of $\G_4$. 
Before we begin, we note that $H^2(\G_4,\Z/2)=H_1(\G_4,\Z/2)=0$ by Poincar\'{e} duality, so the standard discrete faithful
representation $\G_4\to \PSL(2,\C)$ and its complex conjugate lift to $\SL(2,\C)$, and so as above, it is convenient to work in 
$\SL$ rather than $\PSL$.

Since we are only interested in irreducible representations, 
we can assume that if $\rho:\G_4\rightarrow \SL(2,\C)$ is such a representation then $\rho(\G_4)$ is not conjugate into the subgroup consisting of upper triangular matrices in $\SL(2,\C)$. Standard considerations ensure that we can conjugate 
$\rho$ so that $\rho(a)$ fixes $\infty$ and $\rho(b)$ fixes $0$; i.e.
\[ \rho(a) = \begin{pmatrix} x & 1\cr 0& 1/x \cr\end{pmatrix}, ~~ \rho(b) = \begin{pmatrix} y & 0\cr r & 1/y \cr\end{pmatrix}. \]

Referring to (\ref{eq1}), to handle evaluation in Mathematica of the first relation on the matrices, we split it up as follows: {\tt{rel1=Factor[w1-Inverse[w2]]}},
where $w1=ba^{-2}ba^{-1}b^2$ and $w2=ab^2a^{-1}$.   This results in a matrix $R$ whose entries are shown in \S \ref{math} and which solves to zero.
Using the $(1,2)$-entry of $R$ we can solve for $r$ in terms of $x$ and $y$, namely
$$r=\frac{x^4+x^3 y^4-x^3+x^2+y^2}{x y \left(x^2-xy^2-x+1\right)}.$$
We then re-evaluate the first relation in (\ref{eq1}) using this expression for $r$. 
This gives the matrix $R_1$ shown in \S \ref{math} solving to zero. 
To evaluate the second relation of (\ref{eq1}) on matrices, we again split this up: {\tt{rel2=Factor[w3-Inverse[w4]]}}, where $w3=a^2bab^2$ and $w4=aba^2b^{-1}$.  Using the value of $r$ displayed above, results in a matrix $S$
which solves to zero. The numerators $s_i$ $i=1,\ldots ,4$ of the entries of $S$ are displayed in \S \ref{math}.

As discussed in \S \ref{math}, an analysis using Mathematica shows that the only possible solution for $x$ arises from roots of the polynomial $p(x)=x^4-x^3+3 x^2-x+1=0$. Note that $p(x)$ is a symmetric polynomial, so that setting $x_0$ to be a root of $p(x)$, the other roots are $1/x_0$, $\overline{x}_0$, and 
$1/\overline{x}_0$.  Converting to characters shows that $\chi_\rho(a)$ is a root of the equation $X^2-X+1=0$. A similar analysis shows that this polynomial also provides the only possibilities for $y$, and we see  that $\chi_\rho(b)=\chi_\rho(a)$
or $\chi_\rho(b)=\overline{\chi}_\rho(a)$. It is also shown in \S \ref{math} that
$r$ satisfies a degree $4$ polynomial.  

As discussed in \S \ref{math}, the character of the faithful discrete representation $\rho_0$ of $\G_4$ satisfies $\chi_{\rho_0}(b)=\overline{\chi}_{\rho_0}(a)$, and 
$\chi_{\rho_0}(ab)=\chi_{\rho_0}(a)$,  and that the only other distinct character of an irreducible representation of $\G_4$ is that obtained by applying complex conjugation to $\rho_0$.  It follows that $\G_4$ is Galois rigid as claimed.\qed

\medskip 

\subsection{Mathematica output}
\label{math}

\bigskip

\noindent The matrix $R$:

\medskip

\noindent $(1,1)-\rm{entry} =
 \frac{y^6-r x^3 y^5-2 r x y^5+r^2 x^4 y^4+r^2 x^2 y^4-r x^5 y^3-r x^3 y^3+r x^2 y^3-r x
   y^3+r^2 x^4 y^2+r^2 x^2 y^2-r x^5 y-r x^3 y+r x^2 y-x^3}{x^3 y^2}$
    
\smallskip

\noindent $(1,2)-\rm{entry} = -\frac{x^4+y^4x^3-r y x^3-x^3+r y^3 x^2+r y x^2+x^2-r y x+y^2}{x^2 y^2} $
   
   \smallskip
   
\noindent $(2,1)-\rm{entry} =$

\smallskip

\noindent $\frac{r \left(y^2 x^6+x^6-2 r y^3 x^5-2 r y x^5+r^2 y^4 x^4+y^4 x^4+r^2 y^2 x^4-r y^5
   x^3-r y^3 x^3-r y x^3+r^2 y^4 x^2+r^2 y^2 x^2-2 r y^5 x+y^4 x-r y^3 x+y^2
   x+y^6\right)}{x^3 y^3}$
   
   \smallskip
   
\noindent $(2,2)-\rm{entry} = -\frac{x^2 y^6+r x y^5+r y^3+r x y^3-r^2 x^3 y^2-r^2 x y^2+2
   r x^4 y+r x^2 y-x^5}{x^2 y^4} $

\medskip

\noindent The matrix $R_1$ has $(1,2)$-entry $0$ and

\medskip

\noindent $(1,1)-\rm{entry} =\frac{\left(x^2+1\right) \left(x^2+x+1\right) \left(x y^4+x^2-x+1\right) \left(x^2 y^4-x
   y^4+y^4+2 x^2 y^2-3 x y^2+2 y^2+x^2-x+1\right)}{x^2 \left(x^2-y^2 x-x+1\right)^2} $
   
    \smallskip
   
\noindent $(2,1)-\rm{entry} =$
   
 \smallskip
   
\noindent $\frac{\left(x^2+1\right) \left(x^2 y^4-x y^4+y^4+2 x^2 y^2-3 x y^2+2 y^2+x^2-x+1\right)
   \left(x^4+y^4 x^3-x^3+x^2+y^2\right) \left(x^3 y^4+x^2 y^4+x y^4+x^3 y^2+x^2
   y^2+x^2-x+1\right)}{x^3 y^2 \left(x^2-y^2 x-x+1\right)^3} $
   
\smallskip
   
\noindent $(2,2)-\rm{entry} =  \frac{\left(x^2+1\right)
   \left(x^2+x+1\right) \left(x^2 y^4-x y^4+y^4+2 x^2 y^2-3 x y^2+2 y^2+x^2-x+1\right)}{x
   \left(x^2-y^2 x-x+1\right)^2}$

\medskip

\noindent 
We require $R_1$ to be the zero matrix in order to get a representation, and focussing on the $(2,2)$ entry will be
particularly informative: 
the factors $x^2+1$ and $x^2+x+1$ can be disregarded, because setting them equal to zero would imply that $a$ had order $2$ or $3$, and we noted earlier that $\G_4$ does not have an infinite quotient in which $a$ has order $2$ or $3$. Thus  
the $(2,2)$ entry tells us that $x^2 y^4-x y^4+y^4+2 x^2 y^2-3 x y^2+2 y^2+x^2-x+1=0$.

\medskip

\noindent The matrix $S$.

\medskip

\noindent $s_1 = x^{10} y^6+2 x^{10} y^4+2 x^{10} y^2+x^{10}-x^9 y^6-3 x^9 y^4-4 x^9 y^2-2 x^9+2 x^8 y^6+6
   x^8 y^4-x^8 y^3+8 x^8 y^2+5 x^8+x^7 y^8-x^7 y^7+x^7 y^6+x^7 y^5-3 x^7 y^4+2 x^7 y^3-9
   x^7 y^2-6 x^7+x^6 y^9-x^6 y^8+x^6 y^7-x^6 y^5+6 x^6 y^4-4 x^6 y^3+12 x^6 y^2+8 x^6+x^5
   y^8-2 x^5 y^7+2 x^5 y^6+2 x^5 y^5-3 x^5 y^4+4 x^5 y^3-10 x^5 y^2-6 x^5+x^4 y^9-x^4
   y^8+x^4 y^7+x^4 y^6-2 x^4 y^5+7 x^4 y^4-5 x^4 y^3+11 x^4 y^2+5 x^4+x^3 y^8+x^3 y^6+4
   x^3 y^5-4 x^3 y^4+4 x^3 y^3-6 x^3 y^2-2 x^3-x^2 y^7+2 x^2 y^6-4 x^2 y^5+5 x^2 y^4-4
   x^2 y^3+5 x^2 y^2+x^2+x y^7-x y^6+3 x y^5-2 x y^4+2 x y^3-x y^2+y^6-y^5+2 y^4-y^3+y^2$

\medskip

\noindent $s_2 = (y+1)(x^6 y^4-x^6 y^3+2 x^6 y^2-x^6 y+x^6-x^5 y^4+x^5 y^3-2 x^5 y^2+x^5 y-x^5+2 x^4y^4$
$-2 x^4 y^3+4 x^4 y^2-3 x^4 y+3 x^4-x^3 y^4+2 x^3 y^3-3 x^3 y^2+2 x^3 y-2 x^3+2 x^2y^4-2 x^2 y^3$
$+4 x^2 y^2-3 x^2 y+3 x^2-xy^4+x y^3-2 x y^2+x y-x+y^4-y^3+2 y^2-y+1)$
   
   \medskip
   
\noindent $s_3 =   -(x^4+x^3 y^4-x^3+x^2+y^2)(x^5 y^5+x^5 y^3-x^4 y^4-x^4 y^2+x^4 y-x^4+x^3y^5+x^3 y^2-x^3 y+x^3$
$-x^2 y^4+x^2 y^3-2 x^2 y^2+2 x^2 y-2 x^2+x y^4-x y^3+2 x y^2-xy+x-y^4+y^3-2 y^2+y-1)$
   
   \medskip
   
\noindent $s_4 =    x^9 y^7+2 x^9 y^5+x^9 y^3-x^8 y^7-2 x^8 y^5+x^8 y+2 x^7 y^7+3 x^7 y^5-x^7 y^3-2 x^7 y-x^6
   y^7+4 x^6 y^3+4 x^6 y+x^6+2 x^5 y^7+2 x^5 y^5+x^5 y^4-4 x^5 y^3-x^5 y^2-4 x^5 y-2
   x^5-x^4 y^7-x^4 y^6+x^4 y^5-x^4 y^4+5 x^4 y^3+x^4 y^2+4 x^4 y+4 x^4+x^3 y^7+x^3 y^4-3
   x^3 y^3-3 x^3 y^2-2 x^3 y-4 x^3+2 x^2 y^5+x^2 y^4+3 x^2 y^3+3 x^2 y^2+x^2 y+4 x^2-x
   y^5-x y^4-x y^3-3 x y^2-2 x+y^5+y^3+y^2+1$

\medskip

To determine the possibilities for $x$, $y$ and $r$ we use the {\tt{Resultant}} feature in Mathematica to eliminate variables and determine solutions.  
We have already noted that from the $(2,2)$ entry of $R_1$ we know that
$x^2 y^4-x y^4+y^4+2 x^2 y^2-3 x y^2+2 y^2+x^2-x+1$ must vanish.  On taking resultant with $s_i$, for $i=1, \ldots, 4$ to eliminate $y$ 
we are left with the polynomial $p(x)=x^4-x^3+3 x^2-x+1$.

On taking the resultant of $p(x)$ and $\left(x^2 y^4-xy^4+y^4+2 x^2 y^2-3 x y^2+2 y^2+x^2-x+1\right)$ to eliminate $x$ we obtain two possible polynomials: $\left(y^4-y^3+3 y^2-y+1\right)$ and $\left(y^4+y^3+3 y^2+y+1\right)$. Note the latter polynomial is the former evaluated at $-y$. This determines the same $\PSL(2,\C)$ representation, and so we deduce the polynomial that $y$ satisfies is also $p(x)$.

Returning to the matrix $R$, we can use the polynomials that were obtained for $x$ and $y$ and use resultants with each of the numerators of the entries of $R$ to determine $r$.  The only possible common factor is the polynomial 
$r^4-9r^2+36$.  Note this is not quadratic, but we know that for the faithful discrete representation $\rho_0$, characters $\chi_{\rho_0}(a)$, $\chi_{\rho_0}(b)$ and $\chi_{\rho_0}(ab)$ lie in $\Q(\sqrt{-3})$. This occurs when $y=\overline{x}_0$, where $x_0$ is (approximately) the root $0.14840294359835 - 0.632502179219i$
of $p(x)$ and the corresponding value of $r$ is (approximately) $r_0=-2.29128784747792+ 0.8660254037844386467i$. Setting $x=1/x_0$ and $y=1/\overline{x}_0$ with $r=r_0$ provides a representation with the same character as $\rho_0$
and hence determines a conjugate representation (see \cite[Proposition 1.5.2]{CS}). Finally we note that setting $x=y=x_0$ does not yield a representation (since no  value of $r$ solves the first relation of our presentation).

Thus we have shown that the only possible values of $x,y,r$ are those that give the discrete faithful representation and its conjugate, and
the proof that $\G_4$ is Galois rigid is complete.

%%%%%%%%%%%%%%%%%%%%%%%%%%%%%%%%%%%%%%%%

%%%%%%%%%%%%%%%%%%%%%%%%%%%%%%%%%%%%%%%%

%%%%%%%%%%%%%%%%%%%%%%%%%%%%%%%%%%%%%%%%

\end{document}